\theoremstyle{plain}
\newtheorem{theorem}{Theorem}[section]
\newtheorem{lemma}[theorem]{Lemma}
\newtheorem{corollary}[theorem]{Corollary}
\newtheorem{definition}[theorem]{Definition}
\newtheorem{proposition}[theorem]{Proposition}
\newcommand{\la}{\lambda}
\newcommand{\eps}{\varepsilon}
\newcommand{\al}{\alpha}
\newcommand{\gam}{\gamma}
\newcommand{\kap}{\kappa}
\newcommand{\sig}{\sigma}
\newcommand{\del}{\delta}
\newcommand{\om}{\omega}
\newcommand{\Gam}{\mathnormal{\Gamma}}
\newcommand{\Th}{\mathnormal{\Theta}}
\newcommand{\La}{\mathnormal{\Lambda}}
\newcommand{\Sig}{\mathnormal{\Sigma}}
\newcommand{\Ph}{\mathnormal{\Phi}}
\newcommand{\Ps}{\mathnormal{\Psi}}
\newcommand{\Om}{\mathnormal{\Omega}}
\newcommand{\N}{{\mathbb N}}
\newcommand{\R}{{\mathbb R}}
\newcommand{\Z}{{\mathbb Z}}
\newcommand{\Sb}{{\mathbb S}}
\newcommand{\BB}{{\mathbb B}}
\newcommand{\EE}{{\mathbb E}}
\newcommand{\E}{{\mathbb E}}
\newcommand{\PP}{{\mathbb P}}
\newcommand{\NN}{{\mathbb N}}
\newcommand{\RR}{{\mathbb R}}
\newcommand{\ONE}{\boldsymbol{1}}
\newcommand{\calA}{{\mathcal A}}
\newcommand{\calB}{{\mathcal B}}
\newcommand{\calD}{{\mathcal D}}
\newcommand{\calF}{{\mathcal F}}
\newcommand{\calG}{{\mathcal G}}
\newcommand{\calL}{{\mathcal L}}
\newcommand{\calO}{{\mathcal O}}
\newcommand{\calS}{{\mathcal S}}
\newcommand{\bD}{{\mathbf D}}
\newcommand{\bX}{{\mathbf X}}
\newcommand{\frS}{\mathfrak{S}}
\renewcommand{\proof}{\noindent{\bf Proof.\ }}
\newcommand{\diag}{{\rm diag}}
\newcommand{\w}{\wedge}
\newcommand{\pl}{\partial}
\newcommand{\To}{\Rightarrow}
\newcommand{\iy}{\infty}
\newcommand{\cadlag}{c\`adl\`ag }
\newcommand{\noi}{\noindent}
\newcommand{\bi}{\begin{itemize}}
\newcommand{\ei}{\end{itemize}}
\newcommand{\veps}{\varepsilon}
\newcommand{\ti}{\tilde}
\newcommand{\st}{\eta}
\newcommand{\tht}{\zeta}
\newcommand{\one}{\mathbf{1}}
\newcommand{\PS}{{\rm PS}}
\newcommand{\CP}{{\rm CP}}
\newcommand{\es}{\tiny{\emptyset}}
\newcommand{\osc}{{\rm osc}}
\newcommand{\Id}{{\rm Id}}
\newtcolorbox{bx}{colback=white!5!white,colframe=blue!75!black}
\begin{document}

\title[Non-SRBM Diffusion Limits]{Diffusion Limits in the Quarter Plane and Non-Semimartingale Reflected Brownian Motion
}

\date{February 29, 2024}
\author{Rami Atar}
\address{Viterbi Faculty of Electrical and Computer Engineering
\\
Technion, Haifa 3200003, Israel
}
\email{rami@technion.ac.il}

\author{Amarjit Budhiraja}
\address{Department of Statistics and Operations Research
\\
University of North Carolina at Chapel Hill\\
North Carolina 27599\\
USA
}
\email{budhiraj@email.unc.edu}

\subjclass{60J65, 60B10, 60J55, 60J70}
\keywords{reflected Brownian motion,
oblique reflection, the  submartingale problem, diffusion limits, heavy traffic.
}

\begin{abstract}
We consider a continuous-time random walk
in the quarter plane for which the transition intensities
are constant on each of the four faces $(0,\iy)^2$, $F_1=\{0\}\times(0,\iy)$,
$F_2=(0,\iy)\times\{0\}$ and $\{(0,0)\}$. We show that when rescaled diffusively
it converges in law to a Brownian motion with oblique reflection direction $d^{(i)}$
on face $F_i$, $i=1,2$,
defined via the Varadhan-Williams submartingale problem \cite{vw}.
A parameter denoted by $\al$ was introduced in \cite{vw}, measuring the extent
to which $d^{(i)}$ are inclined toward the origin. In the case of the quarter plane,
$\al$ takes values in $(-2,2)$,
and it is known that the reflected Brownian motion is a semimartingale if and only if
$\al\in(-2,1)$. Convergence results via both the Skorohod map and the invariance principle for semimartingale reflected Brownian motion are known
to hold in various settings in arbitrary dimension.  In the case of the quarter plane, the invariance principle was proved for $\alpha \in (-2,1)$ whereas for tools based on the Skorohod map to be applicable it is necessary (but not sufficient) that $\alpha \in [-1,1)$.
Another tool that has been used to prove convergence
in general dimension is the extended Skorohod map, which in the case of the quarter plane
provides convergence for $\alpha=1$.
This paper focuses on the range $\alpha \in (1,2)$, where the Skorohod problem
and the extended Skorohod problem do not possess a unique solution, the limit process is not a semimartingale, 
and convergence to reflected Brownian motion has not been shown before.
The result has implications on the asymptotic analysis of two Markovian
queueing models:
The {\it generalized processor sharing model with parallelization slowdown},
and the {\it coupled processor model}.
In both cases, the diffusion limit in heavy traffic is characterized by
the aforementioned reflected Brownian motion.
The restriction of our treatment to dimension 2
is due to the fact that, for analogous models in higher dimension, the well posedness of the submartingale
problem for the candidate limit process is an open problem.
\end{abstract}

\maketitle

\section{Introduction}\label{sec1}
This paper studies a Markov process on $\calS^1\doteq\Z_+^2$ that,
under diffusion scaling, converges to a reflected Brownian motion
(RBM) in $\calS\doteq[0,\iy)^2$ with oblique reflection on the boundary
$\pl\calS$. The focus is on the case where
the directions of reflection, that are constant on each of
the two faces $F_1=\{0\}\times(0,\iy)$ and $F_2=(0,\iy)\times\{0\}$,
are in a range in which the RBM is not a semimartingale,
and has not been constructed
as a pathwise transformation of planar Brownian motion (BM)
but rather via a submartingale problem introduced in \cite{vw}
(where a general wedge was considered).
The latter is defined along the lines of
the Stroock-Varadhan submartingale problem \cite{SV-B} with an additional condition
to account for the behavior of the process at the origin, where
the reflection vector field is discontinuous.
The existence and uniqueness of solutions was established
in \cite{vw} in the case of zero drift,
and extended to a constant drift in \cite{lakner2023reflected}.
The Skorohod map \cite{dupish1} and the extended Skorohod map \cite{ramanan2006reflected},
which act in path space to transform a BM to an RBM,
have been used in conjunction with the continuous mapping theorem
to prove convergence to RBM in a wide variety of settings and in arbitrary dimension.
In the setting under consideration, no pathwise transformation is known,
and, in particular, these tools
are not available. In \cite{williams1998invariance}, tools
that go beyond continuous mapping techniques were introduced, providing
invariance principles for processes that converge to
a semimartingale RBM (SRBM) in the orthant in any dimension.
These do not apply here either because the limit process is not a semimartingale.
Whereas the question of convergence in absence of these tools
provides the main motivation for this work, further interest stems
from the fact that
the setting also describes two Markovian queueing models. These are
the generalized processor sharing (GPS) with parallelization slowdown, and the coupled processor model. The limit theorem we prove establishes
the heavy traffic limit for both models, characterized via the submartingale problem.

To introduce our setting more precisely,
let $\{e^{(1)},e^{(2)}\}$ denote the standard basis in $\R^2$
(considered as column vectors)
and assign an edge between $x,y\in\calS^1$ whenever $x-y=\pm e^{(i)}$
for $i=1$ or $2$, a relation denoted as $x\sim y$. For $n\in\N$, let $X^n$
be a nearest neighbor
Markov process on the graph $(\calS^1,\sim)$ with initial state $X^n(0)=x^n\in\calS^1$ and transition intensities
\begin{equation}\label{e1}
r^n(x,y)=\begin{cases}
\la^n_i & \text{ if } y=x+e^{(i)},\,i=1,2,\\
\mu^n_i & \text{ if } x\in\calS^o,\, y=x-e^{(i)},\,i=1,2,\\
\mu^n_i+\nu^n_i & \text{ if } x\in F_{i^\#},\, y=x-e^{(i)},\, i=1,2,
\end{cases}
\end{equation}
where throughout, $i^\#=3-i$ and $\calS^o$ denotes the interior of $\calS$. Here $\la^n_i, \mu^n_i, \nu^n_i$ are positive scalars for all $n\in \NN$ and $i=1,2$.
Thus, when one of the components is zero, the intensity of downward jumps of the other
component increases. Assume that
\begin{align}
\label{e7-}
\hat\la^n_i &\doteq n^{-1/2}(\la^n_i-n\la_i)\to\hat\la_i,\\
\hat\mu^n_i &\doteq n^{-1/2}(\mu^n_i-n\mu_i)\to\hat\mu_i,
\label{e7}
\\
\label{e7+}
\hat\nu^n_i &\doteq n^{-1/2}(\nu^n_i-n\nu_i)\to\hat\nu_i,
\end{align}
for $i=1,2$, as $n\to\iy$,
where $\la_i,\mu_i,\nu_i\in(0,\iy)$ and $\hat\la_i,\hat\mu_i,\hat\nu_i\in\R$. Assume moreover that $\la_i=\mu_i$, $i=1,2$;
in the queueing literature this is referred to as the {\em heavy traffic}
condition.
Let $\hat X^n=n^{-1/2}X^n$ and assume that the rescaled initial conditions
converge, namely $\hat x^n\doteq n^{-1/2}x^n\to\hat x\in\calS$.
As long as $\hat X^n$ visits only sites in $\calS^o\cup F_i$,
for either $i=1$ or $2$,
it is well approximated by a BM with oblique reflection on this face.
More precisely, let $\calO$ be a smooth open planar domain
with $\hat x\in\calO \cap \calS$. If, for either $i=1$ or $2$, 
$\bar \calO \cap \{x \in S: x_{i^\#}=0\} = \emptyset$,
then the sequence of processes $\hat X^n$ stopped on exiting $\calO$,
converges in law to an RBM on the half space $\{x \in \RR^2: x_i\ge 0\}$ stopped on
exiting $\calO$. This RBM has drift and diffusion coefficients
\begin{equation}\label{095}
b=(b_1,b_2)', \qquad \Sig=\diag(\sig_1,\sig_2),
\qquad
b_j=\hat\la_j-\hat\mu_j,
\qquad
\sig_j^2=\la_j+\mu_j=2\mu_j,\qquad j=1,2.
\end{equation}
Its reflection vector field on the boundary $\{x \in \RR^2: x_i= 0\}$  takes the constant value
\begin{equation}\label{eq:di}
d^{(i)}=e^{(i)}-\frac{\nu_{i^\#}}{\mu_i}e^{(i^\#)}.
\end{equation}
It is therefore natural to guess that the sequence $\hat X^n$ itself converges in law
to an RBM in $\calS$, with drift and diffusion coefficients as above,
and reflection vector field that, for each $i=1,2$, takes the value $d^{(i)}$ on $F_i$, which is a stochastic
process rigorously defined through the submartingale problem \cite{vw,lakner2023reflected}.
The goal of this work is to prove this convergence for a range of reflection directions $d^{(i)}$ not covered by existing results.

The geometry of the problem is captured in Fig.~\ref{fig1}, where
$\theta^{(i)}$ denotes the angle between the inward normal to face
$F_i$, i.e., $e^{(i)}$, and $d^{(i)}$, considered positive when $d^{(i)}$
inclines toward the origin, namely
\begin{equation}\label{eq:th}
	\theta^{(i)} \doteq \arcsin(-\|d^{(i)}\|^{-1}d^{(i)}_{i^\#}),
	\qquad i=1,2.
\end{equation}

\begin{figure}
\begin{center}
\includegraphics[width=20em]{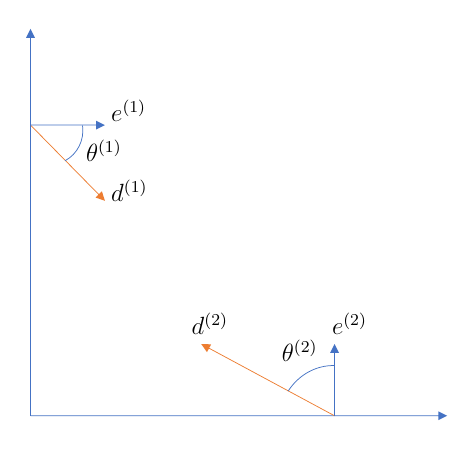}
\end{center}
\vspace{-1em}
\caption{\sl
For $i=1,2$, $d^{(i)}$ is the direction of reflection at face $x_i=0$, and
$\theta^{(i)}$ is the angle between $e^{(i)}$
and $d^{(i)}$, positive when $d^{(i)}$ is toward the origin.
}
\label{fig1}
\end{figure}

To put the goal of this paper in context, consider, in the rest of this
introduction, more general $\Sig$ and $d^{(i)}$ than the ones given
by \eqref{095} and \eqref{eq:di}. Namely, let $\Sig$ be any member
of $\frS$, the set of $2\times2$ diagonal matrices with strictly positive diagonal
entries, and let $d^{(i)}$ merely satisfy the requirement $d^{(i)}_i>0$, $i=1,2$.
When $\Sig=\Id\doteq\diag(1,1)$, we will say that the RBM is
a {\it unit variance} RBM.
The original formulation from \cite{vw} considered a zero drift,
unit variance RBM, with $d^{(i)}$ as above, and proved
the existence and uniqueness in law of the process
defined via the submartingale problem. These results
were recently extended to the case of a general constant drift in \cite{lakner2023reflected}, a result that is used in this paper.
Although the results in \cite{vw,lakner2023reflected},
were formulated for $\Sig=\Id$, they cover
the case of a general $\Sig\in\frS$.
That is, it is intuitively clear and will be made precise
once the formal definition is introduced, that
if $X$ is an RBM with drift, diffusion coefficient, and reflection data $(b, \Sig, \{d^{(i)}\})$, then
$X^* \doteq \Sig^{-1}X$ is an RBM with the data $(b^*, \Id, d^{*,(i)}\})$, where
\begin{equation}\label{094}
b^*=\Sig^{-1} b,\qquad d^{*,(i)}=\Sig^{-1} d^{(i)}, \qquad i=1,2.
\end{equation}
If, analogously to \eqref{eq:th}, we let
\begin{equation}\label{eq:thstar}
	\theta_*^{(i)} \doteq \arcsin(-\|d^{*,(i)}\|^{-1}d^{*,(i)}_{i^\#}),
	\qquad i=1,2,
\end{equation}
then $\theta_*^{(i)}$ is the angle between $e^{(i)}$ and $d^{*,(i)}$,
positive when $d^{*,(i)}$ inclines toward the origin.
A key parameter introduced in \cite{vw} is
\begin{equation}\label{eq:alstar}
\al_*=\frac{\theta_*^{(1)}+\theta_*^{(2)}}{\pi/2}.
\end{equation}
Denoted in \cite{vw} by the symbol $\alpha$, this parameter
measures the degree of ``push'' toward the origin exercised by
the reflection at the boundary, and is known to determine various
properties of the process. In particular, the process is a semimartingale
if and only if $\al_*<1$. Moreover, if $\al_*>0$ then the origin
is a.s.\ visited, and otherwise it is a.s.\ not visited. As it turns out,
the tools available for proving convergence to this process
also depend to a great extent on $\al_*$.
In this work we will be interested in the case $\al_*\in(1,2)$
which is the only range in which convergence has not been shown before.
(In a manner analogous to \eqref{eq:alstar}, one could define
a parameter in terms of the geometry of the original model, namely
$\al \doteq (\pi/2)^{-1}(\theta^{(1)}+\theta^{(2)})$.
It is easy to see that $\al_*\in R$ if and only if $\al\in R$  for each of the ranges
$R=(-2,-1), \{-1\}, (-1,1), \{1\}$ and $(1,2)$.
However, whereas $\al_*$ plays a major analytic role in our proofs,
$\al$ is not as important and will not be used further in this paper.)

Let us briefly review the tools that have been used to prove convergence
when $\al_*\in(-2,1]$.
Results on the aforementioned Skorohod problem for polyhedral domains
\cite{har-rei, dupish1, dup-ram} give sufficient conditions for the transformation
to be Lipschitz continuous in path space, and based on that, the convergence
of diffusively scaled reflected random walks and related queueing models
to an RBM has been shown for a variety of models.
When specialized to the quarter plane, the setting of \cite{har-rei} covers
the case where $\theta^{(i)}\ge0$, $i=1,2$ and $\al_*\in[0,1)$,
and the broader setting of \cite{dupish1,dup-ram} allows for
$\al_*\in[-1,1)$, although the condition
$\al_*\in[-1,1)$ is not sufficient for these results to hold.
In the case $\al_*=1$, the Skorohod map is not well-defined
but the extended Skorohod map introduced in \cite{ramanan2006reflected}
gives a pathwise construction of an RBM and again provides
convergence via continuity properties of this map.

For $\al_*$ in the range $(-2, -1)\cup(1,2)$,
the Skorohod and extended Skorohod problem do not possess unique solutions,
and only weak formulations are available.
In \cite{rei-wil}, a weak formulation of an SRBM in the positive orthant $\RR_+^N$
was given, akin to the notion of a weak solution of a stochastic differential equation.
It was shown that a necessary condition for existence of this process
is the so-called {\it completely-$\calS$} condition, an algebraic
condition on the matrix composed by the directions of reflection $\{d^{(i)}\}$.
In \cite{tay-wil},
existence and uniqueness of this process were established under the completely-$\calS$ condition, and in \cite{williams1998invariance}, under the same condition,
an invariance principle
was proved, yielding convergence to the SRBM.
In the quarter plane, this condition holds if and only if $\al_* \in (-2,1)$,
and in this range the process agrees with the one defined via
the submartingale problem.
The results on the submartingale problem cover the entire range $\al_*\in(-2,2)$,
and thus this is the only formulation that handles
the range $\al_*\in(1,2)$ that is of interest in this paper.

More broadly,
reflected diffusions have been studied in a variety of settings.
In \cite{dai-wil}, existence and uniqueness of an SRBM in convex polyhedral domains with constant direction of reflection on each face were proved. An extension
of the results of \cite{vw} on the submartingale problem for a higher dimensional cone,
with radially homogeneous direction of reflection, was studied in
\cite{kwon-wil}.
Related questions have been studied for domain with cusps. For such domains,
determining whether a reflected diffusion is a semimartingale
has been addressed in \cite{dante1993semimartingale}. More recently,
\cite{cos-kur1} gave existence and uniqueness results in a two-dimensional cusp with varying, oblique directions of reflection, and \cite{cos-kur2} provided such
results for higher dimensional domains with a single singular point.
Conditions under which an obliquely reflected diffusion process
constitutes a Dirichlet process were studied in \cite{kang2010dirichlet}.
When specialized to the case of a wedge, these conditions correspond to
$\al_*=1$. Subsequently, the Dirichlet process property in the case of a wedge with $\al_*\in(1,2)$ was established  in \cite{lakner2019roughness}.
For reflected diffusions in piecewise smooth domains, \cite{kang2017submartingale}
studied the equivalence between well-posedness of the submartingale problem
and  weak existence and uniqueness of solutions for stochastic differential equations with reflection, in settings that
include non-semimartingale reflected diffusions.

There is a rich literature on convergence of discrete processes in domains with boundary
to RBM in dimension $2$ and higher.
As far as queueing models are concerned, the first main result in this direction was
the treatment of the generalized Jackson network in \cite{reiman1984open}, where
convergence of the diffusively scaled queue-length process to a multidimensional RBM
was proved based on continuity of the underlying Skorohod map.
We refer to \cite[Ch.\ 14]{whittbook}
for a survey on convergence results via continuity.

In settings where the Skorohod map does not exist but the RBM is a semimartingale, the aforementioned invariance principle tools of \cite{williams1998invariance} have been used for proving
convergence; see e.g.\ \cite{williams1998diffusion}. The paper \cite{ramanan2003fluid} proves a heavy traffic limit theorem when the RBM is not a semimartingale but given through the extended Skorohod problem \cite{ramanan2006reflected}.
Using different techniques, invariance principles for random walks
converging to RBM in highly non-smooth Euclidean domains
were established in \cite{burdzy2008discrete}. In this setting, the RBM
is defined via the Dirichlet form, and loosely speaking,
the reflection vector field is in the direction normal to the boundary.

A natural desired extension of the setting from \cite{vw} to higher dimension
is to a polyhedral cone with reflection direction that is constant
on each face $F_i$ of codimension $1$, satisfying merely $d^{(i)}\cdot n^{(i)}>0$ for all $i$,
where $n^{(i)}$ denotes the inward normal at face $F_i$. In particular,
the case of an orthant in dimension  $\ge 3$ corresponds to
higher dimensional versions of \eqref{e1} and of the queueing models considered in this paper.
However,
the well-posedness of the corresponding submartingale problem
in this generality remains an open problem.

Finally, we remark that the conjectured connection of diffusion limits
of queueing models to the solution of the submartingale problem
in the quarter plane when $\al_*\in(1,2)$
has previously been proposed in \cite{lakner2019roughness}
and \cite{lakner2023reflected}.

\subsection{Paper organization}
The result and its applications for queueing are presented in Section \ref{sec2},
starting with Section \ref{sec21} which states the definition of the submartingale problem
and the main result. Implications to the heavy traffic
limits of two queueing models are provided in Section \ref{sec22}.
An outline of the proof is described in Section \ref{sec23}.
Several preparatory lemmas are provided in Section \ref{sec3}.
Finally the proof of the main result appears in Section \ref{sec4}.
See Section \ref{sec23} for a detailed description of the content
of Sections \ref{sec3}--\ref{sec4}.

\subsection{Notation}
Denote $\N=\{1,2,3,\ldots\}$,
$\Z_+=\{0,1,2,\ldots\}$, and $\R_+=[0,\iy)$.
For $x,y\in\R^2$, denote the standard inner product by $x\cdot y$
and the Euclidean norm by $\|x\|$.
Denote the boundary and the interior of a set $S\in\R^2$ by
$\pl S$ and $S^o$, respectively. Let
$\BB_r=\{x\in\R^2:\|x\|<r\}$, $\Sb_r=\calS\cap\pl\BB_r$, and
\[
\calS^n=(n^{-1/2}\Z_+)^2,\qquad n\in\N.
\]
If $V\in\R^2$ then $V_i$, $i\in\{1,2\}$ denote its components in the standard basis,
and vice versa: Given $V_i$, $i\in\{1,2\}$,
$V$ denotes the vector $(V_1,V_2)$. Both these conventions hold
also for random variables $V_i$ and processes $V_i(\cdot)$.
Members of $\R^2$ are considered as column vectors.

Denote by $C_b^2(\calS)$ the space of twice continuously differentiable functions defined on a neighborhood of $\calS$ that are bounded and have bounded first two derivatives.
For  $f \in C_b^2(\calS)$, denote by $\nabla f=(\nabla_i f)$
and $\nabla^2f=(\nabla^2_{ij}f)$
the gradient and Hessian.
For $\calS\subset\calO\subset\R^2$ and $f:\calO\to\R$, denote
\[
\pl^{n,i}_\pm f(x)=
f(x\pm n^{-1/2}e^{(i)})-f(x),\qquad x\in\calS^n,\ i\in\{1,2\},\ n\in\N,
\]
well-defined provided $x-n^{-1/2}e^{(i)}\in\calO$.

For $(\bX,d_\bX)$ a Polish space, let $C(\R_+,\bX)$ and $D(\R_+,\bX)$
denote the space of continuous and, respectively, \cadlag paths, endowed
with the topology of uniform convergence on compacts and, respectively,
the $J_1$ topology.

For $\xi\in D(\R_+,\R^N)$, $N=1,2$, an interval $I\subset\R_+$,
and $0\le \del\le S$, denote
\begin{align*}
\osc(\xi,I)&=\sup\{\|\xi(s)-\xi(t)\|:s,t\in I\},
\\
w_S(\xi,\del)&=\sup\{\|\xi(t)-\xi(s)\|:s,t\in[0,S],|s-t|\le\del\},
\\
\|\xi\|^*_S&=\sup\{\|\xi(t)\|:t\in[0,S]\},
\end{align*}
and by $|\xi|(t)$ the total variation of $\xi$ in $[0,t]$, taken with the Euclidean norm in $\R^N$.

$\iota:\R_+\to\R_+$ is the identity map.
For $k \in \Z$ and $x\in\R$ with $k\le x <k+1$, denote by $\sum_{j=1}^x$ 
the sum over $j\in\Z$ such that $1\le j\le k$.

For $x\in\R^2$, $b\in\R^2$ and $\Sig\in\frS$,
a $(b,\Sig)$-BM is a two-dimensional BM
with drift and diffusion coefficients $b$ and $\Sig$, respectively,
starting at $0$.
The Borel $\sigma$-field on a Polish space $\bX$ will be denoted as $\calB(\bX)$. Convergence in distribution is denoted by $\To$.
A sequence $\{P_n\}$ of probability measures on $D(\R_+,\bX)$ is said to be $C$-tight if it is tight with the usual $J_1$ topology on $D(\R_+,\bX)$ and every weak limit point $P$ is supported on $C(\R_+,\bX)$.
With a slight abuse of standard terminology,
a sequence of random elements (random variables or processes) is referred to
as tight when their probability laws form a tight sequence of probability 
measures, and a similar use is made for the term $C$-tight.
For a random variable $X$ with values in some Polish space $\bX$ and a probability measure $P$ on 
$(\bX, \calB(\bX))$, the notation $X \sim P$ denotes that the probability distribution of $X$ equals $P$.

\section{Main result and its applications}\label{sec2}

\subsection{Main result}
\label{sec21}
The processes $X^n$ and their rescaled versions $\hat X^n$ introduced
above are assumed to be defined on a probability space
$(\Om,\calF,\PP)$.
Henceforth,
$b$, $\Sig$ and $d^{(i)}$ are given by \eqref{095} and \eqref{eq:di},
and $b^*,d^{*,(i)},\theta^{(i)}_*$ and $\al_*$
by the corresponding quantities in \eqref{094}, \eqref{eq:thstar}
and \eqref{eq:alstar}.
As already mentioned, we shall focus on the range $\al_*\in(1,2)$,
a condition that can be rephrased
as $\tan\theta^{(1)}_*>\cot\theta^{(2)}_*$, or,
expressed directly in terms of the geometry of the model,
$\tan\theta^{(1)}>\cot\theta^{(2)}$.
In view of \eqref{eq:di}, the latter can also be written as
\begin{equation}\label{e2}
\nu_1\nu_2>\mu_1\mu_2,
\end{equation}
a condition assumed throughout what follows.

We recall the definition of the submartingale problem formulated in \cite{vw}.
It originates from an approach to defining
a measure on path space associated with a diffusion on a domain
with boundary, introduced in \cite{SV-B},
in which smooth domains were considered.
In the case of a wedge, \cite{vw} supplemented the formulation
of \cite{SV-B} by what is referred to below as the `corner property'.

The coordinate process on $C(\R_+,\calS)$ will be denoted by $Z$
throughout this paper and the filtration generated by this process will be referred to as the {\em canonical filtration}. Throughout,
a process defined on $(C(\R_+,\calS), \calB(C(\R_+,\calS))$
will be referred to as a martingale (resp., submartingale) if it is a martingale
(resp., submartingale) with respect to the canonical filtration.

\begin{definition}\label{def1}
{\bf (The submartingale problem)}
\\
(i) A tuple $\tilde\bD=(\tilde b,\tilde\Sig,\{\tilde d^{(i)}\})$ is {\it an admissible data}
if $\tilde b\in\R^2$, $\tilde\Sig\in\frS$,
and $\tilde d^{(i)}\in\R^2$, $\tilde d^{(i)}_i>0$
for $i=1,2$.
\\
(ii)
Let $z\in\calS$.
{\it A solution to the submartingale problem} for the admissible data
$\tilde\bD=(\tilde b,\tilde \Sig,\{\tilde d^{(i)}\})$ starting from $z$
is a probability measure $P_z$ on $C(\R_+,\calS)$
such that, with $E_z$ denoting the corresponding expectation,
the following statements hold.
\\
{\bf Initial condition.} One has
\[
P_z(Z(0)=z)=1.
\]
{\bf The submartingale property.}
Denoting
\begin{equation}\label{55}
\calL f=\frac{1}{2}{\rm trace}(\tilde\Sig\nabla^2f\tilde\Sig)+\tilde b\cdot\nabla f,
\quad \text{ and } \quad
M(t)=f(Z(t))-\int_0^t\calL f(Z(s))ds,
\end{equation}
$M(t)$ is a $P_z$-submartingale for any function $f\in C_b^2(\calS)$ that
is constant in a neighborhood of the origin and
satisfies $\tilde d^{(i)}\cdot\nabla f(x)\ge0$ for $x\in F_i$, $i=1,2$.
\\
{\bf The corner property.}
One has
\[
E_z\Big[\int_0^\iy \ONE_{\{0\}}(Z(t))dt\Big]=0.
\]
\end{definition}

The following wellposedness result for the above submartingale problem is from \cite{vw,lakner2023reflected}.
\begin{theorem}\label{th0}
\cite{vw,lakner2023reflected}
There exists a unique solution to the submartingale
problem for $\tilde\bD$ starting from $z\in\calS$ whenever
$\tilde\bD$ is an admissible data.
\end{theorem}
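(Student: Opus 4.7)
The plan is to reduce the admissible data to a canonical normalized form and then invoke the wedge analysis of \cite{vw,lakner2023reflected}. My first step is to remove the diffusion coefficient: since $\tilde\Sig\in\frS$ is diagonal with strictly positive entries, the linear map $\Phi\colon x\mapsto\tilde\Sig^{-1}x$ is a bijection of $\calS$ onto itself preserving each face $F_i$. A direct change of variables, using the substitution $g=f\circ\Phi^{-1}$, shows that the generator $\calL$ of the transformed data is $\tfrac12\Delta+\tilde\Sig^{-1}\tilde b\cdot\nabla$, and that the boundary inequality $\tilde d^{(i)}\cdot\nabla f\ge0$ on $F_i$ becomes $(\tilde\Sig^{-1}\tilde d^{(i)})\cdot\nabla g\ge0$. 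The corner property is invariant under $\Phi$. Hence $P_z$ solves the submartingale problem for $(\tilde b,\tilde\Sig,\{\tilde d^{(i)}\})$ starting from $z$ if and only if $\Phi_*P_z$ solves it for $(\tilde\Sig^{-1}\tilde b,\Id,\{\tilde\Sig^{-1}\tilde d^{(i)}\})$ starting from $\Phi(z)$, reducing matters to the unit-variance case.

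For the unit-variance case with nonzero drift $\tilde b$, I would pass to the zero-drift problem by Girsanov. Given a solution $P^0_z$ of the zero-drift problem with $(\Id,\{\tilde d^{(i)}\})$, one defines $P_z$ on each $\calF_t$ by $dP_z/dP^0_z=\exp(\tilde b\cdot W(t)-\tfrac12\|\tilde b\|^2 t)$, where $W$ is the continuous martingale part arising in the semimartingale decomposition of $Z$ away from the origin. The subtle point, important for $\al_*\ge1$ where $Z$ is not a global semimartingale, is that $W$ exists only up to localization by hitting times of shrinking neighborhoods of the origin; however, the corner property ensures that excursion-based localization suffices to define a consistent $P_z$. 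Under $P_z$ the generator in \eqref{55} picks up the drift $\tilde b\cdot\nabla f$, the boundary inequality is unchanged, and the corner property is preserved since $P_z$ is locally equivalent to $P^0_z$. Uniqueness transfers by the reverse change of measure, as carried out in \cite{lakner2023reflected}.

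It therefore remains to address the canonical case (zero drift, unit variance), which is the content of \cite{vw}. For existence, one approximates $\calS$ by smooth domains $\calO_\eps\supset\calS$ carrying smooth reflection fields that agree with $\tilde d^{(i)}$ on $F_i\setminus\BB_\eps$; the classical Stroock-Varadhan theory for smooth domains produces reflected diffusions $Z^\eps$, and one verifies $C$-tightness in $C(\R_+,\calS)$ via standard oscillation estimates. Any subsequential weak limit satisfies the submartingale property, since admissible test functions $f$ are constant near $0$ and hence insensitive to the perturbation of the reflection field inside $\BB_\eps$. The corner property is the substantive point: in polar coordinates, $\log\|Z^\eps\|$ behaves like a one-dimensional diffusion with drift determined by $\al_*$, and the hypothesis $\al_*<2$ ensures that in the limit the Lebesgue occupation of $\{0\}$ vanishes. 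For uniqueness, away from $\{0\}$ the problem localizes to classical oblique reflection on a half-plane, which is well-posed; the strong Markov property at successive hitting times of $\{0\}$ then reduces matters to uniqueness of the excursion law from the origin. Varadhan and Williams compute the corresponding resolvent by mapping the wedge conformally onto a half-plane, and show that the corner property singles out a unique boundary condition at $0$, completing the proof.

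The main obstacle is the behavior at the corner. The reflection field is discontinuous there, the process visits $\{0\}$ whenever $\al_*>0$, and for $\al_*\in(1,2)$ no global semimartingale structure is available to pin down the dynamics at the origin via It\^{o} calculus. It is the corner property in Definition~\ref{def1}, combined with the conformal/resolvent analysis of \cite{vw}, that replaces the missing semimartingale input and isolates the correct law.
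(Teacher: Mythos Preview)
Your diagonal reduction $x\mapsto\tilde\Sig^{-1}x$ is exactly the paper's argument, and is in fact the only step the paper carries out. The paper treats Theorem~\ref{th0} as a citation result: once $\tilde\Sig=\Id$, it simply invokes \cite[Theorems 2.5 and 3.10]{vw} for zero drift and \cite[Theorem 2.3]{lakner2023reflected} for general constant drift as black boxes. The one substantive observation the paper makes before citing---which you do not make explicit---is that admissibility ($\tilde d^{(i)}_i>0$ for both $i$) forces each $\theta^{(i)}_*\in(-\pi/2,\pi/2)$ and hence $\al_*\in(-2,2)$ automatically; this is precisely the range in which the cited theorems apply, so no separate hypothesis on $\al_*$ is needed.

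Where you diverge is that you go on to sketch the internal arguments of those references: a Girsanov step to insert the drift, and a smooth-domain approximation followed by a conformal/resolvent analysis for the zero-drift wedge problem. These are plausible high-level summaries, though somewhat impressionistic---the uniqueness argument in \cite{vw} is more delicate than a single conformal map, and their existence construction is not a generic smooth-domain approximation. Your Girsanov sketch correctly flags the non-semimartingale difficulty for $\al_*\ge1$ but leaves the ``excursion-based localization'' unproved; this is exactly the work done in \cite{lakner2023reflected}. None of this extra material is needed for the paper's purposes, and nothing in your reduction step is wrong: you have simply written a longer proof where the paper writes a citation.
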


\proof
In \cite{vw}, a more general wedge was considered, and it was shown,
in the case $\tilde b=0$, $\tilde\Sig=\Id$, that there exists
a unique solution to the submartingale problem provided
that the parameter denoted there by $\al$
(defined as $\al_*$ in \eqref{eq:alstar}) is in the range $(-\iy,2)$.
The same is true for general $\tilde b$ and $\tilde\Sig=\Id$,
studied in \cite{lakner2023reflected}.
Now, in the case of the quarter plane, the parameter $\al$ of \cite{vw}
is always in the range $(-2,2)$ (since $\tilde d^{(i)}_i>0$ for both $i$), and consequently,
the assertion of the theorem holds in these cases, when $\tilde\Sig=\Id$.
In particular, for $\tilde b=0$, $\tilde\Sig=\Id$, this is stated in
\cite[Theorems 2.5 and 3.10]{vw}
and for general $\tilde b\in\R^2$ and $\tilde\Sig=\Id$,
this is \cite[Theorem 2.3]{lakner2023reflected}.

Next, for $\tilde\Sig\in\frS$, note that if $\ti X$ has the law $P_x$,
a solution with data $(\tilde b,\tilde\Sig,\{\tilde d^{(i)}\})$
starting from $x$, then
$\tilde\Sig^{-1}\ti X$ has law $P_y$, a solution with data
$(\tilde\Sig^{-1}\tilde b,\Id,\{\tilde\Sig^{-1}\tilde d^{(i)}\})$, starting from $y=\tilde\Sig^{-1}x$,
and vice versa.
Hence existence and uniqueness follow from the case $\tilde\Sig=\Id$.
\qed

Before stating the main result, we note that if $P_x$ is the solution
of the submartingale problem
then $X\sim P_x$ can be regarded as a process
with sample paths in $D(\R_+, \calS)$.

\begin{theorem}\label{th1}
Let $P_{\hat x}$ be the unique solution to the submartingale problem
with data $\bD=(b,\Sig,\{d^{(i)}\})$ starting from $\hat x$
and let $X\sim P_{\hat x}$. Then $\hat X^n\To X$ in $D(\R_+,\calS)$.
\end{theorem}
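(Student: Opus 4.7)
The plan is the standard two-step strategy exploiting Theorem \ref{th0}: establish $C$-tightness of $\{\hat X^n\}$ in $D(\R_+,\calS)$, and then show that any subsequential weak limit $X$ has law equal to $P_{\hat x}$, the unique solution of the submartingale problem with data $\bD=(b,\Sig,\{d^{(i)}\})$ from $\hat x$.

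For tightness, the jumps of $\hat X^n$ being of size $n^{-1/2}$ ensure that any weak limit is continuous, so it is enough to prove tightness in $D(\R_+,\calS)$. A uniform $L^2$ moment bound follows from the test function $f(x)=\|x\|^2$: writing $g^n(x)=f(n^{-1/2}x)$ and applying the pregenerator $A^n$ of $X^n$, Taylor expansion gives $(A^ng^n)(x)=2b\cdot\hat x+\trace(\Sig^2)+o(1)$ in the interior, and on each face $F_i$ an extra boundary correction of the form $-2n^{1/2}\nu_{i^\#}\hat x_i+O(1)\le O(1)$, which is non-positive to leading order; a Gronwall argument then yields $\EE\bigl[(\|\hat X^n\|^*_T)^2\bigr]\le C_T$ uniformly in $n$. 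The oscillation modulus is then controlled by decomposing trajectories at the hitting times of $\pl\BB_\eps$: outside $\BB_\eps\cap\calS$ the process is well-approximated by a half-plane RBM, as already noted in the paragraph preceding \eqref{095}, so classical oscillation estimates apply, while the total Lebesgue time spent in $\BB_\eps\cap\calS$ during $[0,T]$ is controlled by a quantity $c(\eps,T)\to 0$ as $\eps\to 0$, uniformly in $n$, obtained by applying $A^n$ to a suitable barrier function built from the superharmonic functions used in the Varadhan--Williams construction on the wedge.

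For the identification step, fix a weakly convergent subsequence $\hat X^n\To X$ and realize it almost surely via Skorohod representation. The initial condition $X(0)=\hat x$ is immediate from $\hat x^n\to\hat x$. For the submartingale property, let $f\in C_b^2(\calS)$ be constant in a neighborhood of $0$ with $d^{(i)}\cdot\nabla f\ge 0$ on $F_i$, $i=1,2$. A Taylor expansion of $A^ng^n$ yields
\begin{align*}
(A^ng^n)(x) &= \calL f(\hat x)+o(1) \quad\text{on }\calS^o,\\
(A^ng^n)(x) &= n^{1/2}\mu_i\,d^{(i)}\!\cdot\!\nabla f(\hat x)+O(1) \quad\text{on }F_i,
\end{align*}
and $(A^ng^n)(x)=0$ at the origin. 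Since the leading $n^{1/2}$-boundary term is nonnegative and the expected Lebesgue time that $\hat X^n$ spends on $F_i$ is of order $n^{-1/2}$ (a local-time scaling that can be read off from the $\|\cdot\|^2$ computation above), for any bounded stopping time $\tau\le t$,
\[
\EE\bigl[f(\hat X^n(t))-f(\hat X^n(\tau))\bigr]=\EE\Bigl[\int_\tau^t(A^ng^n)(X^n(s))\,ds\Bigr]\ge\EE\Bigl[\int_\tau^t\calL f(\hat X^n(s))\,ds\Bigr]+o(1),
\]
which on passing to the limit (via Skorohod representation, continuity of $f$ and $\calL f$, and the $L^2$ moment bound for uniform integrability) gives the submartingale property of $f(X(t))-\int_0^t\calL f(X(s))\,ds$. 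The corner property follows by Fatou from the time-in-$\BB_\eps$ estimate of the tightness step on letting $\eps\downarrow 0$. Theorem \ref{th0} then identifies $X\sim P_{\hat x}$, and convergence of the full sequence follows from the standard subsequence argument.

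The main obstacle is the tight control of $\hat X^n$ near the origin, uniformly in $n$. Precisely because $\al_*\in(1,2)$, equivalently $\nu_1\nu_2>\mu_1\mu_2$, the discrete reflection vector pushes the process strongly toward the origin, the candidate limit is not a semimartingale, and none of the Skorohod-map or semimartingale invariance-principle tools are available. The quantitative estimate on $\EE\bigl[\int_0^T\ONE_{\BB_\eps}(\hat X^n(s))\,ds\bigr]$ and on the oscillation of excursions that begin or end on $\pl\BB_\eps$ is therefore the technical core of the proof; I expect it to be developed in the preparatory lemmas of Section~\ref{sec3}, most likely by transporting to the prelimit level a family of super- and subharmonic barrier functions built from the Varadhan--Williams construction.
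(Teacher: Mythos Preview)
Your overall strategy---$C$-tightness plus identification of every subsequential limit via Theorem~\ref{th0}---is correct and matches the paper. But two of the intermediate steps, as you have written them, have genuine gaps.

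First, in your tightness argument, controlling the Lebesgue \emph{time} that $\hat X^n$ spends in $\BB_\eps$ does not control its \emph{oscillation} there: the process can dart in and out of $\BB_\eps$ with large excursions even if the total occupation time is small. The paper's tightness proof (Proposition~\ref{prop:tight}) avoids any corner estimate by a trichotomy on short subintervals $I_k$: either $\hat X^n$ passes from one face to the other while staying in $\calS^o\cap\BB_\eps^c$ (then $\hat R^n$ is flat and $\osc(\hat X^n,I_k)\le\osc(\hat Y^n,I_k)$), or it does not switch faces and stays inside $\BB_{2\eps}$ (oscillation trivially $\le 3\eps$), or on the relevant sub-interval it visits at most one face (the half-plane Skorohod map and \eqref{32} give $\osc(\hat X^n)\le\kap\,\osc(\hat Y^n)$). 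Tightness of $\hat Y^n$ then finishes the argument, with no time-in-$\BB_\eps$ input.

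Second, your proposed route to the corner estimate---applying the Varadhan--Williams barrier $\Phi(y)=\|y\|^{\al_*}\cos(\al_*\theta-\theta^{(1)}_*)$ directly to the prelimit chain---runs into the obstacle that $\Phi\notin C^2$ at the origin (since $\al_*<2$), so the discrete-generator expansion breaks down exactly where you need it. The paper instead decouples upcrossings and downcrossings of $\|\hat X^n\|$: the expected duration of an upcrossing from below $\eps$ to above $c\eps$ is bounded by $C\eps^2$ using a smooth \emph{quadratic} Lyapunov function on the prelimit (Lemma~\ref{lem:momsig}), while the downcrossing duration from $c\eps$ to $\eps$ is bounded below by $\eps^{\al_*}$ using $\Phi$ on the \emph{limit} RBM (Lemma~\ref{lem:r1}) and then transferred to the prelimit via a continuous-mapping argument valid away from the origin (Lemma~\ref{lem:ctymap}). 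Since $\al_*<2$, the cumulative upcrossing time in $[0,S]$ is $o(1)$ as $\eps\to0$ (Proposition~\ref{prop:zeropre}). All of this is first done in the symmetric case \eqref{51} and extended to general $\la^n,\mu^n,\nu^n$ by a Girsanov argument with an $L^2$ bound on the density. Relatedly, your assertion that the time on $F_i$ is $O(n^{-1/2})$ and ``can be read off from the $\|\cdot\|^2$ computation'' is not justified: that computation only shows the boundary term is nonpositive. The paper controls $T^n_1+T^n_2$ by $T^n_0$ plus negligible terms (Lemma~\ref{lem:allthree}), an inequality that hinges on the equivalence of \eqref{e2} with $\beta_1+\beta_2<1$, and then uses the corner estimate to conclude $T^n_0+T^n_1+T^n_2\to0$ (Corollary~\ref{cor:allz}); this is what disposes of the $O(1)$ boundary remainders in the submartingale computation.
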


\newpage

\subsection{Queueing models}\label{sec22}

\subsubsection{GPS with parallelization slowdown}
In the GPS model introduced in \cite{parekh1993generalized},
a server divides its effort among several
streams of jobs according to fixed proportions.
Although it was introduced and analyzed with a general
number of streams, we describe the model here for the case of two streams,
which suffices for the purpose of relating it to our results.
Given positive proportions $\phi_i$, $i=1,2$
such that $\phi_1+\phi_2=1$, and  service rates $\mu^n_i$,
$i=1,2$, when jobs from both classes are present, the server's effort
is split according to the proportions $\phi_i$,
and the job at the head of the line in queue $i$
is served at rate $\phi_i\mu^n_i$, $i=1,2$.
When only the $i$-th queue is non-empty,
all effort is given to the job at the head of the line
in this queue, and thus it is served at rate $\mu^n_i$.

The GPS model with general service time distribution
and general number of streams
was considered in the heavy traffic limit in
\cite{ramanan2003fluid}, where convergence was proved
and the limit process was identified in terms of the extended Skorohod problem.
To relate the model to the setting studied in this paper, we note that
in the special case where the dimension is 2,
the limit process can alternatively be identified
as the unique solution to the submartingale
problem of \cite{vw}, with wedge angle $\pi/2$ and $\al=1$,
when the drift coefficient is $0$; and as the process constructed in \cite{lakner2023reflected} when the drift is nonzero.
An extension to a setting where some of the classes
may be strictly subcritical was studied in \cite{ramanan2008heavy}.
Although the proof of convergence
in \cite{ramanan2003fluid} was not
a mere application of the continuous mapping theorem,
the continuity of the extended Skorohod map was
instrumental to prove the convergence result.
Such a pathwise mapping is not available for the RBM constructed in \cite{vw}
when $\al>1$.

In practice, the implementation of GPS is often performed
by time slicing followed by a round robin scheduling.
This design necessitates preempt and resume operations,
and both may cause job switching overhead.
The effect of switching overhead in processor sharing and round robin
scheduling is widely acknowledged,
and has been described and analyzed e.g.\ in
\cite{gupta2008finding, thompson2009analysis, yun2009channel}.
(See also \cite{peng2022exact}
for job switching overhead analysis for priority scheduling.)
It is thus natural to posit that a fixed proportion of the processing
capacity is lost at times when parallelization takes place.

With this in mind, we consider here a variation of the model 
that will be called {\it GPS with parallelization slowdown}
(GPS--PS), which deviates from the GPS model in that
$\phi_1+\phi_2<1$ is assumed.
In this paper we only aim at the Markovian setting of the model.
Thus letting $\la^n_i$ denote the arrival rates
and $\mu^{\PS,n}_i$ the service rates, the queue-length process $X^n$
is a Markov process on $\calS^1$ with transition intensities
\[
r^{\PS,n}(x,y)=\begin{cases}
\la^n_i & \text{ if } y=x+e^{(i)},\,i=1,2,\\
\phi_i\mu^{\PS,n}_i & \text{ if } x\in\calS^o,\, y=x-e^{(i)},\,i=1,2,\\
\mu^{\PS,n}_i & \text{ if } x\in F_{i^\#},\, y=x-e^{(i)},\, i=1,2.
\end{cases}
\]
As before, $\la^n_i$ are assumed to satisfy \eqref{e7-}, while for $\mu^{\PS,n}_i$ we assume
\[
\hat\mu^{\PS,n}_i\doteq n^{-1/2}(\mu^{\PS,n}_i-n\mu^{\PS}_i)
\to\hat\mu^{\PS}_i.
\]
The critical load (or heavy traffic) condition is expressed by
\[
\la_i=\phi_i\mu^\PS_i,\qquad i=1,2.
\]
To relate this setting to the main result, take $\mu^n_i=\phi_i\mu^{\PS,n}_i$ and
$\nu^n_i=(1-\phi_i)\mu^{\PS,n}_i$. This gives
$(\mu_i,\hat\mu_i)=\phi_i(\mu^\PS_i,\hat\mu^\PS_i)$
and $\nu_i=(1-\phi_i)\mu^\PS_i$. Thus
\[
d^{\PS,(i)}=e^{(i)}-\frac{1-\phi_{i^\#}}{\phi_i}\frac{\mu^\PS_{i^\#}}{\mu^\PS_i}e^{(i^\#)}.
\]
Note that condition \eqref{e2} takes here the form $(1-\phi_1)(1-\phi_2)>\phi_1\phi_2$,
which is equivalent to our model assumption $\phi_1+\phi_2<1$.
As before we assume that $X^n(0) = x^n \in \calS^1$ and $\hat x^n = n^{-1/2}x^n \to \hat x \in \calS$.
As an application of Theorem \ref{th1} we obtain

\begin{corollary}
For the above GPS--PS model,
$\hat X^n\To X\sim P_{\hat x}$, the unique solution to
the submartingale problem with data $\bD^\PS=(b^\PS,\Sig^\PS,\{d^{\PS,(i)}\})$, starting 
from $\hat x$,
where
\[
b^\PS_i=\hat\la_i-\phi_i\hat\mu^\PS_i,
\qquad
(\sig^\PS_i)^2=\la_i+\phi_i\mu^\PS_i=2\phi_i\mu^\PS_i.
\]
\end{corollary}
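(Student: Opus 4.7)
The plan is to reduce the Corollary to a direct application of Theorem~\ref{th1} via the change of variables identified by the authors, namely $\mu^n_i=\phi_i\mu^{\PS,n}_i$ and $\nu^n_i=(1-\phi_i)\mu^{\PS,n}_i$. With this substitution the transition intensities $r^{\PS,n}$ match the general form \eqref{e1} exactly: when $x\in\calS^o$ the downward jump rate in direction $e^{(i)}$ is $\phi_i\mu^{\PS,n}_i=\mu^n_i$, while on the boundary face $F_{i^\#}$ it becomes $\mu^{\PS,n}_i=\mu^n_i+\nu^n_i$. Hence $X^n$ is a Markov process of the kind analyzed in Section~\ref{sec21}.

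Next, I would check the three convergence hypotheses \eqref{e7-}--\eqref{e7+}. Hypothesis \eqref{e7-} for $\hat\la^n_i$ is given directly. For \eqref{e7} and \eqref{e7+}, the assumed convergence of $\hat\mu^{\PS,n}_i$ to $\hat\mu^\PS_i$ yields
\[
\hat\mu^n_i = n^{-1/2}(\phi_i\mu^{\PS,n}_i - n\phi_i\mu^\PS_i) = \phi_i\hat\mu^{\PS,n}_i \to \phi_i\hat\mu^\PS_i,
\]
so one sets $\mu_i=\phi_i\mu^\PS_i$ and $\hat\mu_i=\phi_i\hat\mu^\PS_i$; analogously $\nu_i=(1-\phi_i)\mu^\PS_i$ and $\hat\nu_i=(1-\phi_i)\hat\mu^\PS_i$. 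The heavy traffic condition $\la_i=\mu_i$ reduces to $\la_i=\phi_i\mu^\PS_i$, which is precisely the critical load assumption of the GPS--PS model. The initial condition hypothesis $\hat x^n\to\hat x$ is assumed.

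The crucial algebraic check is condition \eqref{e2}. Substituting the expressions above,
\[
\nu_1\nu_2 - \mu_1\mu_2 = (1-\phi_1)(1-\phi_2)\mu^\PS_1\mu^\PS_2 - \phi_1\phi_2\mu^\PS_1\mu^\PS_2 = (1-\phi_1-\phi_2)\mu^\PS_1\mu^\PS_2,
\]
which is strictly positive iff $\phi_1+\phi_2<1$, exactly the standing assumption for GPS--PS. Thus \eqref{e2} holds and Theorem~\ref{th1} applies.

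Finally, I would identify the limit data. From \eqref{095},
\[
b_i = \hat\la_i - \hat\mu_i = \hat\la_i - \phi_i\hat\mu^\PS_i = b^\PS_i,\qquad
\sig_i^2 = 2\mu_i = 2\phi_i\mu^\PS_i = (\sig^\PS_i)^2,
\]
and from \eqref{eq:di},
\[
d^{(i)} = e^{(i)} - \frac{\nu_{i^\#}}{\mu_i}e^{(i^\#)} = e^{(i)} - \frac{(1-\phi_{i^\#})\mu^\PS_{i^\#}}{\phi_i\mu^\PS_i}e^{(i^\#)} = d^{\PS,(i)}.
\]
Hence $\bD=\bD^\PS$ and Theorem~\ref{th1} gives $\hat X^n\To X\sim P_{\hat x}$, the unique solution of the submartingale problem with data $\bD^\PS$ starting from $\hat x$. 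There is no real obstacle here beyond the bookkeeping; the nontrivial content lies entirely in Theorem~\ref{th1}, and the purpose of the proof is simply to exhibit the reduction and verify \eqref{e2}.
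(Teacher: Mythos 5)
Your proof is correct and is exactly the reduction the paper itself carries out in the paragraph preceding the corollary: substitute $\mu^n_i=\phi_i\mu^{\PS,n}_i$, $\nu^n_i=(1-\phi_i)\mu^{\PS,n}_i$, verify \eqref{e7-}--\eqref{e7+}, the heavy traffic condition, and that \eqref{e2} reduces to $\phi_1+\phi_2<1$, then read off the limit data from \eqref{095} and \eqref{eq:di}. Nothing is missing.
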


\subsubsection{The coupled processor model}

Here, two servers work in parallel, each one serving a queue.
The rate of service offered to the job at the head of the line
of each queue depends on whether the other queue is empty or not.
A motivation for this model is a design in which
a server facing an empty queue is available to
help the other server; further motivation is given in \cite{boxma2013two}.
In Markovian setting, this model was analyzed in
\cite{fayolle1979two}; see also \cite[Section 9]{fayolle1999random}.
They characterized the steady state distribution via
a Riemann--Hilbert boundary value problem.
Later, \cite{cohen2000boundary} and \cite{boxma2013two}
considered extensions to
general service time distributions and, respectively,
Levy-driven queues, and analyzed the stationary joint workload process.

To relate the model to our notation, let $\mu^{\CP,n}_i$
and $\mu^{\CP,n}_i+\nu^{\CP,n}_i$, $i=1,2$ denote the service rates
to class $i$ when queue $i^\#$ is non-empty and, respectively, empty.
Thus $\nu^{\CP,n}_i$ can be regarded the service rate of the helping
activity. The  transition intensities of the queue-length proces $X^n$ are then
\[
r^{\CP,n}(x,y)=\begin{cases}
\la^n_i & \text{ if } y=x+e^{(i)},\,i=1,2,\\
\mu^{\CP,n}_i & \text{ if } x\in\calS^o,\, y=x-e^{(i)},\,i=1,2,\\
\mu^{\CP,n}_i+\nu^{\CP,n}_i & \text{ if } x\in F_{i^\#},\, y=x-e^{(i)},\, i=1,2.
\end{cases}
\]
This is precisely \eqref{e1} if one takes
$\mu^n_i=\mu^{\CP,n}_i$ and $\nu^n_i=\nu^{\CP,n}_i$,
and in full agreement with our main model once one
imposes all the assumptions on $\la^n_i$, $\mu^n_i$
and $\nu^n_i$ from Section  \ref{sec21}, namely \eqref{e7-}--\eqref{e2}, together with $n^{-1/2} x^n \to \hat x$.
Under these conditions we have

\begin{corollary}
For the above coupled processor model,
$\hat X^n\To X\sim P_{\hat x}$, the unique solution to
the submartingale problem with data $\bD^\CP=\bD$, starting from $\hat x$,
identical to that in Theorem \ref{th1}.
\end{corollary}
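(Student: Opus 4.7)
The plan is a direct deduction from Theorem \ref{th1}, not an independent argument. First I would write down the coupled processor intensities $r^{\CP,n}$ side by side with \eqref{e1} and observe that they coincide verbatim under the identification $\mu^n_i = \mu^{\CP,n}_i$ and $\nu^n_i = \nu^{\CP,n}_i$. Under this identification, the hypotheses imposed on the coupled processor parameters---\eqref{e7-} on $\la^n_i$, the stated convergence of $\hat\mu^{\CP,n}_i$ and $\hat\nu^{\CP,n}_i$, the heavy traffic relation $\la_i=\mu_i$, the strict inequality \eqref{e2} in the form $\nu^{\CP}_1\nu^{\CP}_2>\mu^{\CP}_1\mu^{\CP}_2$, and the initial condition $n^{-1/2}x^n\to\hat x$---are precisely the hypotheses of Theorem \ref{th1}. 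In particular, the two processes $\{X^n\}$ and $\{\hat X^n\}$ associated to the coupled processor model are, on the nose, the processes to which Theorem \ref{th1} applies.

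Second, I would verify that the limiting data coincide. Plugging $\mu_i=\mu^{\CP}_i$, $\nu_i=\nu^{\CP}_i$, $\hat\mu_i=\hat\mu^{\CP}_i$ into \eqref{095} and \eqref{eq:di} produces drift $b_i=\hat\la_i-\hat\mu^{\CP}_i$, diffusion matrix $\Sig=\diag(\sqrt{2\mu^{\CP}_1},\sqrt{2\mu^{\CP}_2})$, and reflection directions $d^{(i)}=e^{(i)}-(\nu^{\CP}_{i^\#}/\mu^{\CP}_i)e^{(i^\#)}$, which by definition is exactly $\bD^\CP$. Since $d^{(i)}_i=1>0$, $\bD^\CP$ is admissible in the sense of Definition \ref{def1}(i), so Theorem \ref{th0} guarantees the unique solution $P_{\hat x}$ is well-defined.

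Combining the two steps, Theorem \ref{th1} yields $\hat X^n\To X\sim P_{\hat x}$ with data $\bD=\bD^\CP$, which is the claim. There is no substantive obstacle: the corollary is literally an instance of the main theorem, and the only item to double-check is the parameter bookkeeping, in particular that the boundary intensity $\mu^{\CP,n}_i+\nu^{\CP,n}_i$ (the boosted service rate when the sibling queue is empty) is correctly matched with the third case of \eqref{e1} rather than with an additive split, and that the condition $\phi_1+\phi_2<1$ which governed the GPS--PS instance is replaced here by the direct algebraic condition \eqref{e2} on $(\mu^{\CP}_i,\nu^{\CP}_i)$.
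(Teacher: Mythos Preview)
Your proposal is correct and matches the paper's approach exactly: the paper gives no separate proof for this corollary because, as it notes in the paragraph preceding the statement, the coupled processor model with $\mu^n_i=\mu^{\CP,n}_i$, $\nu^n_i=\nu^{\CP,n}_i$ is literally the model \eqref{e1} with all the assumptions of Section~\ref{sec21} in force, so Theorem~\ref{th1} applies verbatim. Your parameter bookkeeping is precisely what the paper has already carried out in the text.
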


\subsection{Proof outline}\label{sec23}

Owing to the uniqueness stated in Theorem \ref{th0},
the main result can be established by showing that the sequence $\hat X^n$
is tight and that all its subsequential limits form solutions
to the submartingale problem.

Since a continuous map or oscillation inequalities are not available in
the setting under consideration, a different idea is required
to obtain tightness. The argument is based on showing that the following
property implies tightness: On sufficiently short intervals,
a trajectory of $\hat X^n$ can either lie in a small neighborhood of the origin,
or interact with at most one of the faces $F_i$, $i=1,2$.

The next main task is to prove that limits satisfy the corner property.
Our approach is as follows.
For  a constant $c>1$ and arbitrarily small $\eps>0$, we consider
a sequence of upcrossings from below $\eps$ to above $c\eps$,
and downcrossings from above $c\eps$ to below $\eps$,
of the process $\|\hat X^n(t)\|$.
The goal is to show that the expected cumulative time spent in upcrossings,
during any finite time interval $[0,S]$,
converges to zero upon sending $n\to\iy$ and then $\eps\to0$.
This can be achieved via upper estimates on
a single upcrossing duration and lower estimates on a single
downcrossing duration, which show that the former resides at a lower scale than the latter.
During a downcrossing contained in $[0,S]$,
the process is at least $\eps$ away from
the origin. Hence the number of times it switches
from visiting one of the faces $F_i$, $i=1,2$ to another,
is tight, as a sequence indexed by $n$.
The continuous mapping associated with reflection on each one of
the faces can therefore be used to argue that $\hat X^n$,
restricted to a single downcrossing, converges to such a restriction of an RBM.
In particular, downcrossing durations of the former are well approximated
by ones of the latter. Our RBM hitting time estimates are based
on a test function that was constructed in \cite{vw}
and played a central role there.
The downcrossing duration estimate obtained this way
scales like $\eps^{\al_*}$.

For upcrossings, the above approach is not applicable
due to the lack of an analogous representation of the process
via a continuous mapping when it is near the origin.
Instead, the argument is based on the construction of
another test function that is applied directly to the prelimit process.
It gives an estimate on the expected hitting time
from below $\eps$ to above $c\eps$ that is uniform in $n$.
This estimate scales like $\eps^2$.

These are then combined to produce a bound on the expected cumulative time
spent in $\BB_{\eps}$. The proof uses strong Markovity
of both the prelimit and the candidate limit process.
A crucial step is an argument
showing that the time spent by the prelimit process
in $F_1\cup F_2$ is controlled by the time spent at the origin.

All the above steps are carried out in the special case where
$\mu^n_i=n\mu_i=n\la_i=\la^n_i$,
under which the transition intensities of $\hat X^n$
are symmetric in the interior, and as a result, the candidate limiting RBM
has zero drift (but general diagonal diffusion matrix).
A use of Girsanov's theorem to transform symmetric intensities to more general
ones, followed by an estimate on the RN derivative,
gives a bound on the time spent in $\BB_{\eps}$
for the general nonsymmetric case.
The corner property then follows by Fatou's lemma.

To show the submartingale property, we start by deriving
the semimartingale representation
of $f(\hat X^n)$ for a test function $f$, followed by Taylor's expansion.
`Error' terms show up, involving the remainder terms in the expansion
and the time spent by the process in $\pl\calS=\{0\}\cup F_1\cup F_2$.
To show that these terms
converge to zero in probability, the aforementioned argument
that controls the time spent in $F_1\cup F_2$ by that at the origin
is used again. Combining it with the estimate on the time spent
in $\BB_{\eps}$, this allows us to control the time spent
in $\pl\calS$ and show that the error terms vanish in the limit.
The martingale terms in the prelimit representation, and the inequalities satisfied by the test function on the boundary, then give rise to
the submartingale property in the limit as $n\to\iy$.

Sections \ref{sec3}--\ref{sec4} are structured as follows.
Section \ref{sec3} contains preliminary steps in the proof:
In Section \ref{sec31}, certain elementary properties of Skorohod maps are recalled.
Section \ref{sec32} derives several equations satisfied by the processes $\hat X^n$
and Section \ref{sec33} proves their $C$-tightness.
Section \ref{sec34} provides the argument for controlling the time in $F_1\cup F_2$
in terms of the time at $0$.

The proof is then provided in Section \ref{sec4}, starting
with Section \ref{sec41} where estimates on individual
upcrossing and downcrossing durations
are derived. In Section \ref{sec42}, the aforementioned
sequence of upcrossings and downcrossings
is constructed and a bound is obtained on the expected cumulative time
spent in upcrossings. This bound is combined with
a Girsanov transformation in Section \ref{sec43} to deduce the corner property.
Finally, the submartingale property is proved in Section \ref{sec44}.

\section{Preliminaries}\label{sec3}

\subsection{Skorohod maps}\label{sec31}
We recall the definition and some elementary properties of
Skorohod maps on the half line and on the half plane.
The map that constrains a 1-dimensional trajectory to $\R_+$ is defined
as follows.
Given $\psi \in D(\R_+, \R)$ with $\psi(0)\ge 0$, there is a unique pair
$(\phi, \eta) \in D(\R_+, \R_+)^2$ such that $\phi = \psi + \eta$, $\eta(0)=0$, $\eta$ is nondecreasing and  $\int_{[0, \infty)} \phi(t) d\eta(t)=0$.
This pair is given by
\begin{equation}\label{36}
\eta(t)=\sup_{s\in[0,t]}\psi^-(s),\qquad \phi(t)=\psi(t)+\eta(t), \qquad t\ge0,
\end{equation}
The map from $D(\R_+,\R)$ to itself mapping $\psi\mapsto\phi$
is denoted by $\Gam_{\R_+}$.

Given $h\in\R^2$ with $h_1>0$, the map that constrains a 2-dimensional
trajectory to the half plane $\R_+\times\R$ in the oblique direction $h$
is defined as follows. Denoting $\bar h=h/\|h\|$,
given $\psi \in D(\R_+,\R^2)$, $\psi(0)\in\R_+\times \R$, there is a unique pair
$(\phi,\eta) \in D(\R_+,\R_+\times \R)^2$ such that $\phi=\psi+\eta$,
$\eta(0)=0$, $\eta(t)=\bar h|\eta|(t)$, and $\int_{[0,\iy)}\phi_1(t)d|\eta|(t)=0$.
This pair is explicitly expressed as
\[
\bar h_1|\eta|(t)=\eta_1(t)=\sup_{s\in[0,t]}\psi_1^-(s),\qquad t\ge0,
\]
and $\phi=\psi+\eta=\psi+\bar h|\eta|$.
The map from $D(\R_+,\R^2)$ to  $D(\R_+,\R_+ \times \R)$ mapping $\psi\mapsto\phi$ is denoted
$\Gam_{\R_+\times\R}^h$.
Analogously, for $h \in \R^2$ such that $h_2>0$, the map constraining a planar trajectory
to $\R\times\R_+$, in the direction $h$, is denoted by $\Gam_{\R\times\R_+}^h$.

With a slight abuse of notation,
we will use the same notation $\Gam_{\R_+}$, $\Gam_{\R_+\times\R}^h$
and $\Gam_{\R\times\R_+}^h$ for the corresponding maps
from $D([0,S],\R)$ to itself or $D([0,S],\R^2)$ to itself for finite $S$.

As follows from the explicit construction above, for $h$ in a compact subset $H$ of
$(0,\iy)\times\R$,
\begin{equation}\label{32}
\osc(\phi,[s,t])\le \kap\,\osc(\psi,[s,t]),
\end{equation}
whenever $\phi=\Gam_{\R_+\times\R}^h(\psi)$, where $\kap\in(0,\iy)$ is a constant that
does not depend on $\psi$, $s,t$ and $h\in H$.

\subsection{Equations for the rescaled process}\label{sec32}

Let $\calA_i$ and $\calD_i$, $i=1,2$
be independent Poisson point processes on $\R_+^2$ with Lebesgue intensity measure,
defined on $(\Om,\calF,\PP)$.
Denote by $\calA_i^c(ds,dz)=\calA_i^c(ds,dz)-ds\times dz$,
$\calD_i^c(ds,dz)=\calD_i^c(ds,dz)-ds\times dz$, $i=1,2$ their compensated versions.
The Markov process $X^n$ introduced in Section \ref{sec1} is equal
in distribution to the first component (again denoted as $X^n$)
of the tuple $(X^n,A^n,D^n)$ that constitutes the unique solution of the system
\begin{equation}
\label{eq:220}
\begin{split}
	 X^n(t) &= x^n+A^n(t)-D^n(t),
	 \\
	A^n_i(t) &= \int_{[0,t]\times \R_+} \ONE_{[0, \la^n_i]}(z) \calA_i(ds, dz),
	\\
	D^n_i(t) &= \int_{[0,t]\times \R_+} \left[\ONE_{[0, \mu^n_i]}(z) \ONE_{\calS^o}(X^n(s-)) + 
	\ONE_{[0, \mu^n_i+\nu^n_i]}(z) \ONE_{F_{i^\#}}(X^n(s-))\right] \calD_i(ds, dz).
\end{split}
\end{equation}
Define
\begin{align*}
T^n_i(t) &\doteq \int_0^t \ONE_{F_i}(X^n(s)) ds, \qquad i=1,2,
\qquad T^n_{12}\doteq T^n_1+T^n_2,
\\
T^n_0(t) &\doteq \int_0^t \ONE_{\{0\}}(X^n(s)) ds,
\qquad
 T^n_{\es}(t) \doteq \int_0^t \ONE_{\calS^o}(X^n(s)) ds.
\end{align*}
Note that
$D^n_i$ is a point process with compensator
\[
\mu^n_i T^n_{\es}(t) + (\mu^n_i +\nu^n_i) T^n_{i^\#}(t).
\]
In addition to the rescaled process
$\hat X^n_i(t) = n^{-1/2} X^n_i(t)$ already defined, let
\begin{equation}
\label{093}
 \hat A^n_i(t) \doteq n^{-1/2}[A^n_i(t) - \la^n_i t], \qquad \hat D^n_i(t) \doteq n^{-1/2}[D^n_i(t) -
 \mu^n_i T^n_{\es}(t) - (\mu^n_i +\nu^n_i) T^n_{i^\#}(t)].
\end{equation}
Then, for $t\ge 0$, we have
\begin{equation}\label{eq:hatxn}
	\hat X^n_i(t) = \hat x^n_i + \hat A^n_i(t) - \hat D^n_i(t)  + \hat \la^n_i t + \la_i n^{1/2} t - n^{-1/2}\mu^n_i T^n_{\es}(t) -n^{-1/2} (\mu^n_i +\nu^n_i) T^n_{i^\#}(t).
\end{equation}
Let $b^n_i=\hat\la^n_i-\hat\mu^n_i$ and note that $b^n\to b$.
Also, let
\[
d^{(i),n}=e^{(i)}-\frac{\nu^n_{i^\#}}{\mu^n_i}e^{(i^\#)},
\]
and note that $d^{(i),n}\to d^{(i)}$. Then by \eqref{eq:hatxn}
and the assumption $\la_i=\mu_i$,
\begin{align}\label{30}
\hat X^n(t)&=\hat Y^n(t)+\hat R^n(t)
\\
\label{30+}
\hat Y^n(t)&=\hat x^n+\hat A^n(t)-\hat D^n(t)+b^nt
\\
\label{31}
\hat R^n(t)&=\sum_in^{-1/2}\mu^n_id^{(i),n}T^n_i(t)
+\sum_in^{-1/2}\mu^n_ie^{(i)}T^n_0(t),
\end{align}
where, throughout, $\sum_{i=1}^2$ is abbreviated to $\sum_i$.
In this representation, $\hat R^n$ is a boundary term that constrains
 in direction $d^{(i),n}$ when $\hat X^n$ is on face $F_i$, $i=1,2$,
and in the direction $\sum_i\mu^n_ie^{(i)}$ when it is at the origin.

A variation of \eqref{30}--\eqref{31} is as follows.
Let
\begin{equation}\label{eq:circd}
\mathring{D}^n_i(t)
=n^{-1/2} \int_{[0,t]\times \R_+} \ONE_{[0, \mu^n_i]}(z) \calD_i^c(ds, dz).
\end{equation}
Letting $\mathring{M}^n=\mathring{D}^n-\hat D^n$, we have
\begin{align}
\mathring{M}^n_i(t)
&=
n^{-1/2}\int_{[0,t]\times \R_+} \left[\ONE_{[0, \mu^n_i]}(z)-\ONE_{[0, \mu^n_i]}(z) \ONE_{\calS^o}(X^n(s-)) - 
	\ONE_{[0, \mu^n_i+\nu^n_i]}(z) \ONE_{F_{i^\#}}(X^n(s-))\right] \calD^c_i(ds, dz)\nonumber
\\
&=
n^{-1/2}\int_{[0,t]\times \R_+} \left[\ONE_{[0, \mu^n_i]}(z)
\ONE_{\{0\}\cup F_i}(X^n(s-)) - 
	\ONE_{(\mu^n_i, \mu^n_i+\nu^n_i]}(z) \ONE_{F_{i^\#}}(X^n(s-))\right] \calD^c_i(ds, dz).\label{eq:1150}
\end{align}
Hence, with $\mathring{Y}^n=\hat Y^n+\hat D^n-\mathring{D}^n$,
we obtain from \eqref{30}--\eqref{30+},
\begin{align}\label{60}
\hat X^n(t)&=\mathring Y^n(t)+\mathring M^n(t)+\hat R^n(t)
\\
\label{60+}
\mathring Y^n(t)&=\hat x^n+\hat W^n(t)
=\hat x^n+\hat A^n(t)-\mathring D^n(t)+b^nt.
\end{align}
Equations \eqref{60}, \eqref{60+}, coupled with \eqref{31},
give an alternative to \eqref{30}--\eqref{31}, where now $\hat A^n$
and $\mathring D^n$ are mutually independent centered and scaled Poisson processes
and it is obvious that $\hat W^n$ converges to
a $(b,\Sig)$-BM.
Our proof uses both \eqref{30}--\eqref{31} and \eqref{60}--\eqref{60+}.

\subsection{$C$-tightness}\label{sec33}

\begin{proposition}\label{prop:tight}
			The sequence of processes $\{\hat X^n, n \in \N\}$ is $C$-tight in $D(\R_+, \R_+^2)$.
		\end{proposition}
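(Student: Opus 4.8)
The plan is to use the decomposition \eqref{60}--\eqref{60+}, namely $\hat X^n = \mathring Y^n + \mathring M^n + \hat R^n$, and establish $C$-tightness of each piece, then assemble. First, $\mathring Y^n = \hat x^n + \hat W^n$ with $\hat W^n = \hat A^n - \mathring D^n + b^n \iota$ a sum of mutually independent compensated, diffusively-scaled Poisson processes plus a deterministic drift converging (by \eqref{e7-}--\eqref{e7}) to $b\iota$; hence $\hat W^n \To (b,\Sig)$-BM in $D(\R_+,\R^2)$ by the functional CLT, and in particular $\{\mathring Y^n\}$ is $C$-tight. Next, since $\hat X^n\ge 0$ componentwise and $\mathring M^n$ has jumps of size $n^{-1/2}$ and is a martingale, a bound on its predictable quadratic variation — which is at most of order $n^{-1}(\mu^n_i+\nu^n_i)t = O(t)$ — gives, via Doob's inequality, that $\{\mathring M^n\}$ is $C$-tight and its limit points are identically zero (actually one only needs tightness with vanishing jumps). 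So the real work is $C$-tightness of the reflection term $\hat R^n$ given by \eqref{31}.

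The key idea, as flagged in the proof outline of Section~\ref{sec23}, is that on short intervals a trajectory of $\hat X^n$ either stays in a small ball $\BB_\eps$ about the origin or interacts with at most one face $F_i$. Concretely, I would fix $S>0$ and $\eps>0$ and partition $[0,S]$ according to excursions of $\|\hat X^n\|$ relative to the level $\eps$. On any subinterval where $\|\hat X^n\|\ge \eps$ throughout, the process stays away from the origin, so the term $n^{-1/2}\sum_i\mu^n_i e^{(i)}T^n_0$ does not increase there, and moreover the process cannot be on both faces on a sufficiently short sub-subinterval (the two faces are at mutual distance $\ge\eps/\sqrt2$ from each other near the boundary of $\BB_\eps$, so a transit between them forces an oscillation of $\hat X^n$ of size at least $\eps/\sqrt2$, which is controlled by the $C$-tightness of $\mathring Y^n + \mathring M^n$). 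On such a face-interval $[s,t]$ where only $F_i$ is visited, $\hat X^n$ restricted to $[s,t]$ is exactly of the form $\Gam^{h^n}_{\cdot}$ applied (in the appropriate half-plane) to $\mathring Y^n+\mathring M^n$, with $h^n = \mu^n_i d^{(i),n}$ lying in a compact subset of the relevant half-plane for $n$ large; hence the oscillation bound \eqref{32} gives $\osc(\hat R^n,[s,t])\le (\kap+1)\,\osc(\mathring Y^n+\mathring M^n,[s,t])$. On an interval contained in $\{\|\hat X^n\|<c\eps\}$ for a suitable $c$, one has $\osc(\hat R^n,\cdot)\le \osc(\hat X^n,\cdot) + \osc(\mathring Y^n+\mathring M^n,\cdot) \le 2c\eps + \osc(\mathring Y^n+\mathring M^n,\cdot)$ directly from \eqref{60}. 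Summing the oscillation contributions over the boundedly-many excursion intervals in $[0,S]$ (the number of level-$\eps$/level-$c\eps$ crossings being controlled, again via \eqref{32}-type bounds, by the modulus of continuity of $\mathring Y^n+\mathring M^n$), one obtains, for every $\del>0$, with high probability uniformly in $n$,
\[
w_S(\hat R^n,\del) \le C\bigl(w_S(\mathring Y^n+\mathring M^n,\del') + \eps\bigr)
\]
for $\del'$ small, which with the $C$-tightness of $\mathring Y^n+\mathring M^n$ and the arbitrariness of $\eps$ yields $C$-tightness of $\hat R^n$ by the standard modulus-of-continuity criterion (together with the trivial uniform-boundedness $\|\hat R^n\|^*_S \le \|\hat X^n\|^*_S + \|\mathring Y^n+\mathring M^n\|^*_S$, and $\|\hat X^n\|^*_S$ being controlled because $\hat X^n=\mathring Y^n+\mathring M^n+\hat R^n$ with nonnegative coordinates and $\hat R^n$ having controlled increments — closing the loop by a Gronwall/bootstrap argument on $[0,S]$).

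The main obstacle is precisely making rigorous the dichotomy ``stay near origin or touch at most one face,'' i.e.\ controlling the switching between faces and the crossings of the annulus $\BB_{c\eps}\setminus\BB_\eps$ by a number that is tight in $n$ and small as $\del\to0$ — and simultaneously keeping the argument non-circular, since the bound on $\osc(\hat X^n,\cdot)$ used to count switches itself depends on $\osc(\hat R^n,\cdot)$. I would resolve this by running the crossing-count and the oscillation bound together as a single induction over successive excursion intervals: each excursion away from $\BB_\eps$ contributes an $\hat R^n$-oscillation controlled by \eqref{32} applied to $\mathring Y^n+\mathring M^n$ on that very interval, so that the number of sub-switches on it is in turn bounded by how many times $\mathring Y^n+\mathring M^n$ oscillates by $\eps/(4\kap)$, a quantity that is $O(S/\del)$-tight by the $C$-tightness of $\mathring Y^n+\mathring M^n$. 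Everything downstream (the corner term $T^n_0$ and the behaviour in $\BB_{c\eps}$) contributes only $O(\eps)$ to the modulus, which is absorbed by letting $\eps\to0$ after $n\to\iy$.
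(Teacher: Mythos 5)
Your plan captures the paper's key ingredients — split $\hat X^n$ into a free part that is $C$-tight plus a boundary term, use the half-plane Skorohod-map oscillation bound~\eqref{32} on stretches where at most one face is active, note that a transit from $F_1$ to $F_2$ while staying outside $\BB_\eps$ forces a free-process oscillation of order $\eps$, and absorb near-origin behaviour into an $O(\eps)$ error. These are exactly the ideas the paper uses. However, the excursion-based organization you propose, and in particular the ``single induction over successive excursion intervals,'' leaves the very difficulty you flag unresolved: over a fixed $\del$-window the process may make many round trips between $\Sb_\eps$ and $\Sb_{c\eps}$, and you would need to control the accumulated $\hat R^n$-variation over all of them simultaneously; a per-excursion bound does not immediately sum to a small modulus of continuity for $\hat R^n$, and trying to count the excursions by the oscillation of $\hat X^n$ is precisely the circularity you identified.

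The paper avoids all of this by bounding $\osc(\hat X^n,\cdot)$ directly (not $\osc(\hat R^n,\cdot)$) on a deterministic grid: partition $[0,S]$ into fixed intervals $I_k$ of length $\del$ and classify each as $K^n_1$ (both faces visited while outside $\BB_\eps$), $K^n_2$ (the process dips into $\BB_\eps$), or $K^n_3$ (stays in $\BB_\eps^c$, so at most one face visited). On $K^n_1$, the event is unlikely because the transit forces $w_S(\hat Y^n,\del)>\eps$. On $K^n_3$, apply~\eqref{32} directly to the whole $I_k$. On $K^n_2$, one shows the process cannot leave $\BB_{2\eps}$ without producing, within $I_k$, a sub-interval of $\eps$-oscillation outside $\BB_\eps$ touching at most one face, which is again controlled by~\eqref{32}; so $\osc(\hat X^n,I_k)$ is bounded by the $\BB_{2\eps}$ diameter with high probability. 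This fixed-grid confinement argument replaces any summation over excursion pieces and dissolves the circularity. One more remark: your claim that $\mathring M^n$ has limit points that are ``identically zero'' cannot be established here — its predictable quadratic variation is of order $T^n_0+T^n_1+T^n_2$, and that these vanish is Corollary~\ref{cor:allz}, which is proved much later and itself relies on Proposition~\ref{prop:tight}; but, as you note, only $C$-tightness of $\mathring M^n$ (bounded predictable QV, vanishing jumps) is needed, and that is indeed available at this stage. The paper also sidesteps $\mathring M^n$ entirely by working with the decomposition~\eqref{30}--\eqref{31}, where $\hat Y^n$ is already shown $C$-tight via its quadratic variation.
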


\proof
We will use \eqref{30}--\eqref{31}. We first argue that $\{\hat Y^n\}$ is $C$-tight. Note that the sequence
$\hat A^n$ is $C$-tight as it converges to a BM.
The sequence $\hat D^n$ is $C$-tight as its $i$-th component is
a martingale with quadratic variation $[\hat D^n_i](t)
=n^{-1}D^n_i(t)$, which itself is a tight sequence of processes,
all limits of which are a.s.\ $c$-Lipschitz for a suitable constant $c$. Furthermore $b^n \to b$.
Combining these facts, $\{\hat Y^n\}$ is a $C$-tight sequence.

Now we turn to proving $C$-tightness of $\{\hat X^n\}$. Fix $S>0$. Because the initial conditions $\hat x^n$ converge,
it suffices to show that for any $\eps>0$ and $\eta>0$ there is $\del>0$ such that
\[
\limsup_n\PP(w_S(\hat X^n,\del)>\eps)<\eta.
\]
In the rest of this proof, $\del$ is always of the form $S/N$, some $N\in\N$.
For $\del>0$ and $k\in K_0\doteq\{1,2,\ldots,S/\del\}$, denote
$I_k=I^\del_k=[(k-1)\del,k\del]$. Then
\begin{equation}\label{50}
w_S(\hat X^n,\del)\le 2\max_{k\in K_0}\osc(\hat X^n,I_k).
\end{equation}
Consider the following partitioning of $K_0$:
\begin{align*}
K^n_1&=\{k\in K_0: \mbox{ for some } s,t\in I_k,\, s<t,\, \mbox{ there exist } i\in\{1,2\}, \mbox{ such that }
\\
&
\hspace{2em}
\hat X^n(s-)\in F_i, \hat X^n(t)\in F_{i^\#},
\hat X^n(u)\in\calS^o\cap \BB_\eps^c , \mbox{ for all } u\in[s,t)
\},
\\
K^n_2&=\{k\in K_0\setminus K^n_1:\min_{t\in I_k}\|\hat X^n(t)\|\le\eps\},
\\
K^n_3&=\{k\in K_0\setminus K^n_1:\min_{t\in I_k}\|\hat X^n(t)\|>\eps\}.
\end{align*}

Suppose that given $\eps>0$ and $\eta>0$ there is $\del>0$ such that
\begin{align}
\label{40}
&\limsup_n\PP(K^n_1\ne\emptyset)<\eta,
\\
\label{41}
&\limsup_n\PP(K^n_2\ne\emptyset \text{ and }
\max_{k\in K^n_2}\max_{t\in I_k}\|\hat X^n(t)\|>2\eps)<\eta,
\\
\label{42}
&
\limsup_n\PP(K^n_3\ne\emptyset \text{ and }
\max_{k\in K^n_3}\osc(\hat X^n,I_k)>\eps)<\eta.
\end{align}
Let us argue that, in this case,
\begin{equation}\label{420}
\limsup_n\PP(w_S(\hat X^n,\del)>6\eps)<3\eta.
\end{equation}
By \eqref{50}, on the event $w_S(\hat X^n,\del)>6\eps$,
there exists $k\in K_0$ for which $\osc(\hat X^n,I_k)>3\eps$.
If there exists such $k\in K^n_1$ then $K^n_1\ne\emptyset$,
an event having probability $<\eta$ from \eqref{40}.
If there exists such
$k\in K^n_3$ then in particular $\osc(\hat X^n,I_k)>\eps$,
an event having probability $<\eta$ from \eqref{42}. If there exists such
$k\in K^n_2$ then it is impossible that
\[
\max_{t\in I_k}\|\hat X^n(t)\|\le2\eps
\]
because the diameter of the set $\calS\cap \BB_{2\eps}$
is $2\sqrt{2}\eps<3\eps$. Hence, from \eqref{41}, this event also has probability $<\eta$,
showing \eqref{420}.
Note that \eqref{420}
gives $C$-tightness. It thus remains to prove \eqref{40}--\eqref{42}.

To show \eqref{40}, note that if $K^n_1$ is nonempty then for $s$ and $t$
as in the definition of $K^n_1$, one has (for $n$ sufficiently large) $\|\hat X^n(t)-\hat X^n(s)\|>\eps$.
Moreover, during the interval $[s,t)$, $\hat X^n$ takes values in $\calS^o$, hence, by
\eqref{31}, and recalling the definition of $T^n$, we have that $\hat R^n$ is flat over the interval. Hence by \eqref{30},
\[
\PP(K^n_1\ne\emptyset)\le\PP(w_S(\hat Y^n,\del)>\eps),
\]
and the claim follows by $C$-tightness of $\hat Y^n$.

Next, under the event indicated in \eqref{41},
there exist $k \in K_0$ and $s,t\in I_k$ satisfying $s<t$, such that
either $\|\hat X^n(s)\|\le\eps$, $\|\hat X^n(t)\|>2\eps$
or $\|\hat X^n(s)\|>2\eps$, $\|\hat X^n(t)\|\le\eps$.
Because the size of jumps is $n^{-1/2}$, it follows that there exist
two members of $I_k$, that are again denoted by $s<t$, such that
$\|\hat X^n(t)-\hat X^n(s)\|\ge \eps-n^{-1/2}$,
and during $[s,t]$, $\hat X^n$ does not visit $\BB_\eps$. In addition,
by the definition of $K^n_2$, $\hat X^n$ visits at most one of the faces $F_1$ or $F_2$
(in addition to $\calS^o$) during $[s,t]$.

Consider now the event in \eqref{42}.
Under this event,  there exists a $k \in K_0$, and 
$[s,t]\subset I_k$ with $\osc(\hat X^n,[s,t])>\eps$, during which at most
one of $F_1$ and $F_2$ is visited. 

Hence, in both cases, we can find a $k \in K_0$ and $s,t \in I_k$ with $s<t$, such that
 $\osc(\hat X^n,[s,t]) \ge \eps-n^{-1/2}$, and for all $u\in[s,t]$,
$\hat X^n(u)\in S^o\cup F_i$
for either $i=1$ or $2$. Suppose the former holds. Then
by \eqref{30}--\eqref{31}, for $u\in[s,t]$,
\begin{align*}
\hat X^n(u)-\hat X^n(s)
&=\hat Y^n(u)-\hat Y^n(s)+\hat R^n(u)-\hat R^n(s),
\\
\hat R^n(u)-\hat R^n(s)&=n^{-1/2}\mu^n_1d^{(1),n}(T^n_1(u)-T^n_1(s)).
\end{align*}
In particular, $\int_{[s, t)}\hat X^n_1(u)d|\hat R^n|(u)=0$.
If we let
\[
\check X^n(u)=\hat X^n(s+u), \qquad \check Y^n(u)= \hat X^n(s) + \hat Y^n(s+u)-\hat Y^n(s),
\qquad
0\le u\le t-s,
\]
we see that $\check X^n(u)=\Gam_{\R_+\times\R}^{d^{(1),n}}(\check Y^n)(u)$,
$u\in[0,t-s]$. Recalling that $d^{(1),n}$ converge,
it follows by \eqref{32} that, for some positive constant $c$, 
\[
\eps-n^{-1/2}\le\osc(\hat X^n,[s,t])\le c\,\osc(\hat Y^n,[s,t]).
\]
Clearly this conclusion also holds in the case where $F_2$ is visited (and $F_1$ is not).
This gives
\begin{align*}
&\PP(K^n_2\ne\emptyset \text{ and }\max_{k\in K^n_2}\max_{t\in I_k}\|\hat X^n(t)\|>2\eps)
\vee\PP(K^n_3\ne\emptyset \text{ and }\max_{k\in K^n_3}\osc(\hat X^n,I_k)>\eps)\\
&\quad\le
\PP(c\,w_S(\hat Y^n,\del)\ge\eps-n^{-1/2}).
\end{align*}
Now \eqref{41} and \eqref{42} follow on using once again the fact that $\{\hat Y^n\}$ are $C$-tight.
\qed

\subsection{A relation between the boundary processes}\label{sec34}

Let
\[
\hat\calF_t=\sig\{\calA_i([0,s]\times[0,z]),\calD_i([0,s]\times[0,z]),
i=1,2,\,s\in[0,t],\,z\in\R_+\}, \qquad t\in\R_+.
\]
Then, for all $n$, all the processes that were constructed in
Section \ref{sec32} are adapted to this filtration.

\begin{lemma}\label{lem:allthree}
Fix $S>0$. Let $s^n$ be a sequence of $\hat\calF_t$-stopping times,
and $t^n$ a sequence of random variables satisfying
$s^n\le t^n\le s^n+S$.
Then
there exist a constant $\kap_1\in(0,\iy)$ and a sequence of random variables $\xi^n\to0$
in probability such that
\[
T^n_{12}(t^n)-T^n_{12}(s^n)
\le \kap_1(T^n_0(t^n)-T^n_0(s^n))+\kap_1n^{-1/2}\|\hat X^n(s^n)\|+\xi^n.
\]
\end{lemma}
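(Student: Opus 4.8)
The statement compares the time $\hat X^n$ spends on the two faces $F_1\cup F_2$ to the time it spends at the origin, over a (random) window $[s^n,t^n]$ of bounded length. The basic mechanism is that while $\hat X^n$ sits on a face $F_i$, its normal coordinate $\hat X^n_{i^\#}$ equals $0$ but it still receives Poisson increments in the $i^\#$ direction at macroscopic rate, so it cannot stay on $F_i$ for long without either returning to the origin or being pushed into the interior. The plan is to quantify this via the one-dimensional Skorohod decomposition of the normal coordinate on each face.

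\textbf{Step 1: Reduce to one coordinate on one face.} Fix $i\in\{1,2\}$ and look at the coordinate $\hat X^n_{i^\#}$. When $\hat X^n$ is on $F_i$ this coordinate is $0$; when $\hat X^n$ is on $F_{i^\#}$ or at $0$ this coordinate may be positive; in the interior it is positive. The idea is to view $T^n_i$ (time on $F_i$) as the ``local time'' of the coordinate $\hat X^n_{i^\#}$ at $0$, but restricted to the set where the \emph{other} coordinate is positive — i.e., excluding the time at the origin. Using \eqref{60}--\eqref{60+} (the martingale form), write $\hat X^n_{i^\#} = \mathring Y^n_{i^\#} + \mathring M^n_{i^\#} + \hat R^n_{i^\#}$, where from \eqref{31}, $\hat R^n_{i^\#}$ has a part that decreases at rate $\propto T^n_{i^\#}$ (the reflection on face $F_{i^\#}$ pushes in direction $d^{(i^\#)}$, which has negative $i^\#$... wait — let me be careful: on $F_{i^\#}$ the normal is $e^{(i^\#)}$, and $d^{(i^\#),n}_{i^\#}=1>0$, so on $F_{i^\#}$ the reflection \emph{increases} $\hat X^n_{i^\#}$) and a part that increases at rate $\propto T^n_i$ coming from the $e^{(i^\#)}$-component of $d^{(i),n}$ (which is $-\nu^n_i/\mu^n_{i^\#}<0$, so reflection on $F_i$ \emph{decreases} coordinate $i^\#$), plus the origin term $\propto T^n_0$. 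So coordinate $i^\#$ behaves like a free process plus a nondecreasing pushing-up term (from $F_{i^\#}$ and from $\{0\}$) minus a term increasing proportionally to $T^n_i$.

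\textbf{Step 2: Apply the Skorohod map to coordinate $i^\#$.} The coordinate $\hat X^n_{i^\#}$ is a one-dimensional nonnegative process; its increments are free (a $C$-tight process converging to a BM-like process) plus pushing. The downward push proportional to $T^n_i$ occurs only when $\hat X^n_{i^\#}=0$ (i.e., on $F_i$ or at $0$); the upward push proportional to $T^n_{i^\#}+T^n_0$ occurs only when $\hat X^n_{i^\#}=0$ too. One reorganizes this as: $\hat X^n_{i^\#} = \Gamma_{\R_+}(\text{something})$, where the ``something'' has a total-variation contribution from the $F_i$-push of size $c\, n^{-1/2}\mu^n_i (T^n_i(t^n)-T^n_i(s^n))$ — wait, this is exactly $O(n^{1/2})\cdot n^{-1/2} T^n_i = O(T^n_i)$, not small. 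So a naive Skorohod bound gives $T^n_i \le C(\text{oscillation of the driving noise} + T^n_{i^\#} + T^n_0 + \text{boundary})$, which is circular because it bounds $T^n_i$ by $T^n_{i^\#}$ and vice versa. Resolving this circularity is the crux.

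\textbf{Step 3: Break the circularity — the main obstacle.} The resolution: one cannot bound $T^n_1$ and $T^n_2$ separately, but one can bound $T^n_{12}=T^n_1+T^n_2$ by analyzing one well-chosen linear combination of the two coordinates. Consider $g(\hat X^n)$ where $g(x)=a_1 x_1 + a_2 x_2$ for positive constants $a_1,a_2$ chosen so that the reflection data becomes favorable: we need $d^{(i)}\cdot\nabla g = a_i - a_{i^\#}\nu^n_i/\mu^n_{i^\#} \le -\delta<0$ on \emph{both} faces. That requires $a_i \mu^n_{i^\#} < a_{i^\#}\nu^n_i$ for both $i$, i.e., $a_1\mu^n_2 < a_2\nu^n_1$ and $a_2\mu^n_1 < a_1\nu^n_2$, whose consistency is precisely $\nu^n_1\nu^n_2 > \mu^n_1\mu^n_2$ — the standing assumption \eqref{e2} (in the limit). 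With such $a$, the process $g(\hat X^n)$ is nonnegative, starts at $g(\hat x^n) = O(\|\hat X^n(s^n)\|)$ shifted, receives bounded free increments, and has boundary push that is \emph{strictly negative} (decreasing at rate $\ge c n^{1/2}$) whenever $\hat X^n\in F_1\cup F_2$, while at the origin the push is $n^{-1/2}(a_1\mu^n_1 + a_2\mu^n_2)T^n_0$, i.e., increases at rate $O(n^{1/2})$. So: $g(\hat X^n(t^n)) - g(\hat X^n(s^n)) = (\text{free increments over }[s^n,t^n]) + O(n^{1/2})(T^n_0(t^n)-T^n_0(s^n)) - c\, n^{1/2}(T^n_{12}(t^n)-T^n_{12}(s^n))$ plus lower-order terms. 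Since $g(\hat X^n(t^n))\ge0$ and the free increments over a window of length $\le S$ are $O_P(1)$, rearranging gives
\[
c\, n^{1/2}(T^n_{12}(t^n)-T^n_{12}(s^n)) \le g(\hat X^n(s^n)) + O(n^{1/2})(T^n_0(t^n)-T^n_0(s^n)) + \zeta^n,
\]
with $\zeta^n = O_P(1)$; dividing by $c\,n^{1/2}$ yields $T^n_{12}(t^n)-T^n_{12}(s^n)\le \kappa_1(T^n_0(t^n)-T^n_0(s^n)) + \kappa_1 n^{-1/2}\|\hat X^n(s^n)\| + \xi^n$ with $\xi^n = \zeta^n/(c\,n^{1/2})\to0$ in probability.

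\textbf{Step 4: Make ``free increments are $O_P(1)$'' precise.} The one subtlety is that the window $[s^n,t^n]$ starts at a stopping time $s^n$ but $t^n$ is only a random variable $\le s^n+S$; still, taking a supremum over the interval $[s^n, s^n+S]$ of the free-increment process (which is $\mathring W^n_{\text{linear combo}} + b^n\cdot a$-drift, a $C$-tight sequence converging to a BM) and using the strong Markov property at $s^n$ together with stationarity of the Poisson driving measures gives a sequence of random variables, say $\sup_{u\le S}|\mathring W^n(s^n+u)-\mathring W^n(s^n)|$, that is tight; this plays the role of the main term in $\zeta^n$. One also absorbs the martingale $\mathring M^n$ and the error between $\hat D^n$ and $\mathring D^n$ into $\zeta^n$; these are $O_P(1)$ over any bounded window by the $C$-tightness arguments of Section \ref{sec33}. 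The negligible terms (coming from $\mu^n_i/ (n^{1/2})\to \mu_i$ not being exactly $\mu_i$, and $d^{(i),n}\to d^{(i)}$) only perturb the constants and can be handled by choosing the strict inequalities in Step 3 with room to spare, valid for all large $n$. This completes the plan; the genuinely non-routine insight is the choice of the linear functional $g$ in Step 3 and the observation that its existence is equivalent to \eqref{e2}.
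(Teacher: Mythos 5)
Your proposal is correct and takes essentially the same approach as the paper: the paper carries out your Step~3 with the explicit choice $a = h^n$, $h^n_i = n/(\mu^n_i+\nu^n_i)$, for which both coefficients of $T^n_i$ in $a\cdot\bar R^n$ reduce to $\beta^n_1+\beta^n_2-1<0$ (which encodes \eqref{e2}), and then rearranges exactly as you do, proving $\xi^n\to0$ via Burkholder--Davis--Gundy along the lines of your Step~4. A small index slip: with $d^{(i)}=e^{(i)}-(\nu_{i^\#}/\mu_i)e^{(i^\#)}$ one gets $d^{(i)}\cdot\nabla g = a_i - (\nu_{i^\#}/\mu_i)a_{i^\#}$, so the requirements are $a_i\mu_i<a_{i^\#}\nu_{i^\#}$ (not $a_i\mu_{i^\#}<a_{i^\#}\nu_i$); the consistency condition still multiplies out to $\mu_1\mu_2<\nu_1\nu_2$, so your conclusion is unaffected.
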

Note that the result does not require tightness of $s^n$.

\proof
Denote
\begin{equation}\label{35}
\tht^n_i = n^{-1}\mu^n_i+n^{-1}\nu^n_i,
\qquad
\beta^n_i = \mu^n_i  (\mu^n_i+\nu^n_i)^{-1}.
\end{equation}
By \eqref{e7}--\eqref{e7+} one has, as $n \to \infty$,
$$
\beta^n_i=\frac{\mu^n_i}{\mu^n_i+\nu^n_i} \to\beta_i\doteq\frac{\mu_i}{\mu_i+\nu_i}.
$$
Also, by \eqref{e2}, $\beta_1+\beta_2 <1$.
Hence there are $\eps>0$ and $n_0 \in \N$ such that for $n\ge n_0$,
$\beta^n_1+ \beta^n_2 <1-\eps$. In what follows, $n\ge n_0$.

Denote
$$
\bar H^n = n^{-1/2}\hat H^n,
$$
where $\hat H^n$ is each of the processes
$\hat A^n, \hat D^n, \hat X^n, \hat Y^n, \hat R^n$.
Denote $h^n=(1/\tht^n_1,1/\tht^n_2)$.
Then by \eqref{30}, $h^n\cdot\bar X^n=h^n\cdot\bar Y^n+h^n\cdot\bar R^n$.
Recall that $d^{(i),n}_i=1$ and $d^{(i),n}_{i^\#}=-\nu^n_{i^\#}/\mu^n_i$.
Hence by \eqref{31}, with $\bar\mu^n=n^{-1}\mu^n$
and $\bar\nu^n=n^{-1}\nu^n$,
\[
\bar R^n_i=\bar\mu^n_i T^n_i-\bar\nu^n_iT^n_{i^\#}+\bar\mu^n_iT^n_0,
\]
and
\begin{align*}
h^n\cdot\bar R^n
&=\frac{\bar\mu^n_1 T^n_1-\bar\nu^n_1T^n_2+\bar\mu^n_1T^n_0}{\bar\mu^n_1+\bar\nu^n_1}
+\frac{\bar\mu^n_2 T^n_2-\bar\nu^n_2T^n_1+\bar\mu^n_2T^n_0}{\bar\mu^n_2+\bar\nu^n_2}
\\
&=(\beta^n_1+\beta^n_2-1)T^n_{12}+(\beta^n_1+\beta^n_2)T^n_0.
\end{align*}
This gives
\[
(1-\beta^n_1-\beta^n_2)T^n_{12}(t)=(\beta^n_1+\beta^n_2)T^n_0(t)
-h^n\cdot\bar X^n(t)+h^n\cdot\bar Y^n(t).
\]
In view of the bound $(1- \beta^n_1-\beta^n_2) > \varepsilon$
and the nonnegativity of $h^n\cdot\bar X^n$,
\[
\eps(T^n_{12}(t^n)-T^n_{12}(s^n))
\le(\beta^n_1+\beta^n_2)(T^n_0(t^n)-T^n_0(s^n))
+h^n\cdot\bar X^n(s^n)+\xi^n_0,
\]
where
\[
\xi^n_0=\|h^n\cdot\bar Y^n(s^n+\cdot)-h^n\cdot\bar Y^n(s^n)\|^*_S.
\]
By the boundedness of $\beta^n_i$ and $h^n_i$,
the result will follow once it is shown that $\xi^n_0\to0$
in probability.

To this end,
note that $\bar D^n_i$ is an $\hat\calF_t$-martingale with quadratic variation
$[\bar D^n_i]=n^{-2}D^n_i$. Moreover, by \eqref{eq:220},
one has for some $C \in (0,\infty)$,
\[
D^n_i(s^n+S)-D^n_i(s^n)\le\calD_i([s^n,s^n+S]\times[0,Cn]).
\]
As a result, $\EE\{[\bar D^n_i](s^n+S)-[\bar D^n_i](s^n)\}\le CSn^{-1}$,
and by the Burkholder-Davis-Gundy inequality,
$\|\bar D^n_i(s^n+\cdot)-\bar D^n_i(s^n)\|^*_S\to0$ in probability.
A similar estimate holds for $\bar A^n_i$. Using this along with
the boundedness of $h^n$, $\hat x^n$ and $b^n$
in \eqref{30+} shows that $\xi^n_0\to0$ in probability.
\qed

\section{Proof of main result}\label{sec4}

Here we provide the proof of Theorem \ref{th1}, where the main two steps
are stated as parts (i) and (ii) of the following result. We will assume,
without loss of generality, that weak limit points $X$ of $\hat X^n$
are defined on the original probability space $(\Om,\calF,\PP)$.

\begin{proposition}
	\label{prop1}
	Let $X$ be a weak limit point of the sequence $\hat X^n$.
\\
(i) One has
	$$\E\Big[\int_0^\iy \ONE_{\{0\}}(X(t))dt\Big]=0.$$
(ii)
The process
\[
f(X(t))-\int_0^t\calL f(X(s))ds,
\qquad
\calL f\doteq\frac{1}{2}{\rm trace}(\Sig\nabla^2 f\Sig)+b\cdot\nabla f,
\]
with $f\in C_b^2(\calS)$ that is constant in a neighborhood of the origin and
satisfies $d^{(i)}\cdot\nabla f(x)\ge0$ for $x\in F_i$, $i=1,2$,
is a $\PP$-submartingale with respect to the filtration generated by $X$. Equivalently, if 
$P_{\hat x}$ is the law induced by $X$ on $C(\R_+,\calS)$ and
$M$ is given by \eqref{55} with data $\bD$  (recalling $Z$ is the coordinate process)
then $M$ is a $P_{\hat x}$-submartingale.
\end{proposition}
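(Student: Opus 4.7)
For part (i), the plan is to quantify how little time $\hat X^n$ spends near the origin by decomposing its trajectory into alternating upcrossings (from $\|\hat X^n\|<\eps$ to $\|\hat X^n\|>c\eps$) and downcrossings (the reverse), for some fixed $c>1$ and small $\eps>0$. Letting $U^n(S)$ denote the cumulative duration of upcrossings on $[0,S]$, one has
\[
\E\Big[\int_0^S \ONE_{\BB_\eps}(\hat X^n(s))\,ds\Big]\le \E[U^n(S)],
\]
so it suffices to show $\E[U^n(S)]\to0$ as $n\to\iy$ followed by $\eps\to0$. The argument rests on two complementary hitting-time estimates. First, a uniform-in-$n$ upper bound of the form $\E[\text{single upcrossing}]\le C\eps^2$, proved by constructing an appropriate prelimit test function and applying optional sampling to the martingale structure in \eqref{60}--\eqref{60+}. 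Second, a lower bound $\E[\text{single downcrossing}]\ge c'\eps^{\al_*}$: during a downcrossing $\hat X^n$ stays outside $\BB_\eps$, hence by \eqref{32} and Proposition \ref{prop:tight} it is approximately governed by a continuous Skorohod map driven by a $(b,\Sig)$-BM, reducing the estimate to a hitting-time bound for the limit RBM obtained via the Varadhan--Williams radial test function (scaling like $\|x\|^{\al_*}$) from \cite{vw}. Since the number of downcrossings in $[0,S]$ is at most $S/(c'\eps^{\al_*})$, the total expected upcrossing time is $O(\eps^{2-\al_*})\to 0$ because $\al_*\in(1,2)$.

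These estimates are most naturally carried out in the symmetric case $\mu^n_i=n\mu_i=\la^n_i$, where the prelimit interior transitions are symmetric and the candidate limit has zero drift, which is precisely the setting suited to the Varadhan--Williams test function. The general case is then reached by a Girsanov change of measure acting on the driving Poisson measures $\calA_i, \calD_i$, with a Radon--Nikodym derivative whose second moment is bounded uniformly in $n$ on bounded time intervals; a Cauchy--Schwarz step transports the bound. Fatou's lemma applied to the weak limit $X$, using that $\ONE_{\BB_\eps}$ is bounded and decreases to $\ONE_{\{0\}}$ as $\eps\downarrow 0$, yields the corner property (i).

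For part (ii), fix $f\in C_b^2(\calS)$ as in the statement. Applying Dynkin's formula for the jump Markov process $\hat X^n$ and Taylor-expanding $f$ along each jump of size $n^{-1/2}e^{(i)}$, I would write
\[
f(\hat X^n(t))-f(\hat x^n)=\calM^n(t)+\int_0^t\calL f(\hat X^n(s))\,ds+\sum_i\int_0^{t}g^{n,i}(\hat X^n(s))\,dT^n_i(s)+R^n(t),
\]
where $\calM^n$ is a mean-zero martingale obtained by compensating the Poisson integrals in \eqref{eq:220}, $g^{n,i}\to\mu_i\,d^{(i)}\!\cdot\!\nabla f$ uniformly on $F_i$ (this term encodes the enhanced boundary jump rates $\nu^n_{i^\#}$), and $R^n(t)$ collects higher-order Taylor remainders uniformly bounded by a constant times $t+T^n_{12}(t)+T^n_0(t)$ times a power of $n^{-1/2}$, plus contributions from a neighborhood of the origin which vanish because $f$ is constant there. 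Lemma \ref{lem:allthree} (applied with $s^n=0$) bounds $\E[T^n_{12}(S)]$ in terms of $\E[T^n_0(S)]$ plus vanishing terms, and part (i) together with the $C$-tightness of $\hat X^n$ shows $T^n_0(S)\to 0$ in probability, so $R^n\to 0$ uniformly on compacts. The boundary integrals $\int g^{n,i}\,dT^n_i$ are asymptotically nonnegative by the standing hypothesis $d^{(i)}\cdot\nabla f\ge 0$ on $F_i$, and $\calM^n$ converges along any convergent subsequence to a martingale by standard martingale convergence (the quadratic variations converge and jumps are uniformly negligible). Passing to the limit in
\[
\E\Big[h(\hat X^n|_{[0,s]})\Big(f(\hat X^n(t))-f(\hat X^n(s))-\int_s^t\calL f(\hat X^n(u))\,du\Big)\Big]\ge o(1),
\]
valid for bounded continuous functionals $h\ge 0$, gives the submartingale property for $X$ with respect to its own filtration, equivalently the $P_{\hat x}$-submartingale property of $M$.

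The main obstacle will be the upcrossing estimate, since near the origin $\hat X^n$ cannot be represented by a continuous path map and a suitable test function must be constructed directly at the prelimit level. The subsequent Girsanov step also has to be sharp, since the margin $2-\al_*>0$ in the scaling $\eps^{2-\al_*}$ is narrow and a crude control of the Radon--Nikodym derivative would destroy it.
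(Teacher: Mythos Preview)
Your overall architecture matches the paper's proof closely: the upcrossing/downcrossing decomposition, the $O(\eps^2)$ upcrossing bound via a prelimit test function in the symmetric case \eqref{51}, the downcrossing bound via the Varadhan--Williams function applied to the limit RBM, the Girsanov transfer with a uniformly $L^2$-bounded density, and Fatou for (i); and Dynkin/Taylor plus the sign of the boundary term and Corollary~\ref{cor:allz} for (ii).

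There is, however, a genuine gap in how you combine the two hitting-time estimates. From a first-moment lower bound $\E[\text{single downcrossing}]\ge c'\eps^{\al_*}$ you cannot conclude that ``the number of downcrossings in $[0,S]$ is at most $S/(c'\eps^{\al_*})$'': the number of completed downcrossings is random, the relevant stopping time involves the upcrossings as well, and a Wald-type argument runs into the usual overshoot obstruction. The paper sidesteps this by proving instead a Laplace-transform bound for the limit RBM, $E_x[e^{-\tau^\eps}]\le 1-a\eps^{\al_*}$ for $x\in\Sb_{c\eps}$ (Lemma~\ref{lem:r1}), and then, after a careful pathwise extension and convergence argument (Lemmas~\ref{lem:ctymap} and \ref{lem11}), obtains by successive conditioning the tail bound $\PP\big(\sum_{k=1}^N(\gamma^n_{2k}-\gamma^n_{2k-1})\le S\big)\le e^{S-aN\eps^{\al_*}}$. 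The time in $\BB_\eps$ is then split on the event $\{N^\eps_n\le\eps^{-\beta}\}$ for some $\beta\in(\al_*,2)$: on this event the total upcrossing time is $O(\eps^{2-\beta})$, and the complementary probability is controlled by the tail bound. This exponential-moment step, not a first-moment bound, is what makes the counting work. A smaller point in (ii): your boundary integrand $g^{n,i}$ cannot stay bounded and converge to $\mu_i d^{(i)}\!\cdot\!\nabla f$ --- the actual boundary contribution is of order $n^{1/2}\mu_i\,d^{(i)}\!\cdot\!\nabla f(\hat X^n(s))\,dT^n_i(s)$, so it need not vanish even though $T^n_i\to0$; only its sign is used, which you do note.
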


Parts (i) and (ii) of the above result are proved in Section \ref{sec43} and Section \ref{sec44},
respectively.

\noindent {\bf Proof of Theorem \ref{th1}.}
In view of the tightness of the sequence $\hat X^n$ stated in
Proposition~\ref{prop:tight} and the uniqueness
of solutions to the submartingale problem stated in
Theorem \ref{th0}, it suffices to show that
the law $P_{\hat x}$ of any weak limit point $X$ is a solution of the submartingale
problem with data $\bD$ starting from $\hat x$. To this end, note that
the initial condition follows from the assumed convergence
$\hat x^n\to\hat x$, whereas the corner property and the submartingale
property follow from parts (i) and (ii) of Proposition \ref{prop1},
respectively.
\qed

In the next two subsections we establish some estimates in preparation for the proof of Proposition~\ref{prop1}.

\subsection{Estimates on upcrossing and downcrossing durations}\label{sec41}

This subsection is concerned with upcrossings of
the modulus $\|\hat X^n(t)\|$ of the rescaled process
and downcrossings of the modulus
$\|X(t)\|$ of the candidate limit process, and develops
bounds on their durations. These bounds are proved
under a special choice of the rate parameters,
\begin{equation}\label{51}
\la^n_i=n\la_i,
\qquad
\mu^n_i=n\mu_i,
\qquad
\nu^n_i=n\nu_i,
\qquad
n\in\N,\, i=1,2.
\end{equation}

For $n\in\N$ and $x\in\calS^n$, let $\PP_x=\PP^n_x$ and $\EE_x=\E^n_x$ denote
the probability measure on $(\Om,\calF)$
for which $\hat X^n(0)=x$ a.s.\ and the corresponding expectation.

\begin{lemma}
	\label{lem:momsig}
Assume \eqref{51} holds.
For $\veps>0$ let $\st^n_{\veps} \doteq \inf\{t\ge 0: \|\hat X^n(t)\| \ge \veps\}$.
Then there is a constant $\kap \in (0,\infty)$, and for each $\veps>0$, an $n_0 \in \N$, such that, for all $n\ge n_0$,
$$\sup_{x \in \calS^n}  \E_x[\st^n_{\kap\veps} ] \le \veps^2.$$
\end{lemma}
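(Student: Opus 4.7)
The plan is to construct a nonpositive quadratic test function $g$ on $\calS$ such that $\calL^n g(x) \le -1$ for every $x\in\calS^n$ and every $n\in\N$, with $|g(x)|\le C\|x\|^2$ for a constant $C$; given such $g$, Dynkin's formula applied at $\st^n_{\kap\eps}$ will yield the conclusion by a standard optional-stopping argument. Specifically, I would try
\[
g(x) \;=\; -A(x_1^2 + x_2^2) \;-\; B\, x_1 x_2,
\]
with positive constants $A,B$ to be chosen depending only on $\{\mu_i,\nu_i\}$. The cross term $-Bx_1x_2$ is essential: without it, on a face $F_{i^\#}$ the discrete generator produces a contribution of order $n^{1/2}x_i\cdot 2A\nu_i$ coming from the enhanced reflection rate $\mu^n_i+\nu^n_i$, which blows up with $n$. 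Including $-Bx_1x_2$ contributes a compensating term $-B\mu_{i^\#}\,x_i\,n^{1/2}$ to $\calL^n g$ on $F_{i^\#}$, so the coefficient of $n^{1/2}x_i$ becomes $2A\nu_i-B\mu_{i^\#}$, which can be made nonpositive by taking $B$ large.

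A direct computation under the symmetric rates \eqref{51}, exact because $g$ is a degree-$2$ polynomial, yields the following values of $\calL^n g$, all independent of $n$: on $\calS^o\cap\calS^n$, $\calL^n g(x) = -2A(\mu_1+\mu_2)$; at the origin, $\calL^n g(0) = -A(\mu_1+\mu_2)$; and at $x=(x_1,0)\in F_2$,
\[
\calL^n g(x) \;=\; n^{1/2}x_1\bigl(2A\nu_1 - B\mu_2\bigr) \;-\; A\bigl(2\mu_1+\mu_2+\nu_1\bigr),
\]
with the analogous formula on $F_1$. Choosing $A=1/(\mu_1+\mu_2)$ and $B=2A\,\max\!\bigl(\nu_1/\mu_2,\,\nu_2/\mu_1\bigr)$ then ensures $\calL^n g(x)\le -1$ for all $x\in\calS^n$ and all $n\in\N$.

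The rest is routine. By Dynkin's formula, $g(\hat X^n(t))-g(\hat X^n(0))-\int_0^t\calL^n g(\hat X^n(s))\,ds$ is a local martingale, and stopping at $\st^n_{\kap\eps}\wedge t$ turns it into a true martingale because the stopped process remains in $\BB_{\kap\eps+n^{-1/2}}\cap\calS$ where $g$ and $\calL^n g$ are bounded. Taking expectations, rearranging, and using $g\le 0$ on $\calS$ together with $|g(y)|\le(A+B/2)\|y\|^2$, one obtains
\[
\E_x\!\bigl[\st^n_{\kap\eps}\wedge t\bigr] \;\le\; g(x) - \E_x\!\bigl[g(\hat X^n(\st^n_{\kap\eps}\wedge t))\bigr] \;\le\; (A+B/2)\bigl(\kap\eps+n^{-1/2}\bigr)^{2}.
\]
Sending $t\to\iy$ (monotone convergence), then fixing $\kap$ small enough that $4(A+B/2)\kap^2\le 1$ and taking $n_0=n_0(\eps)$ large enough that $n^{-1/2}\le\kap\eps$ for $n\ge n_0$, gives $\E_x[\st^n_{\kap\eps}]\le\eps^2$; the case $x\notin\BB_{\kap\eps}$ is trivial since then $\st^n_{\kap\eps}=0$.

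The main subtlety is in the design of $g$. Assumption \eqref{e2} precisely prevents choosing a coercive (negative-definite) quadratic that satisfies both boundary conditions $d^{(i)}\cdot\nabla g\le 0$: the two conditions together with $B<2A$ would force $\nu_1\nu_2<\mu_1\mu_2$. Hence $g$ must be chosen merely nonpositive rather than coercive, which causes no difficulty here since $g$ is only evaluated in a small neighborhood of the origin.
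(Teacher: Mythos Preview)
Your proof is correct and follows essentially the same approach as the paper: build a quadratic Lyapunov function with a cross term $-Bx_1x_2$ calibrated so that the enhanced downward rate on each face is exactly compensated, yielding $\calL^n g\le -1$ uniformly in $n$, and then apply Dynkin/optional stopping. The paper's test function $\Psi(x)=a_0^2-(x_1^2/\la_1+x_2^2/\la_2+2Ax_1x_2(\la_1\la_2)^{-1/2})$ differs only cosmetically—anisotropic diagonal weights and a constant shift to make $\Psi>0$ near the origin instead of working with $g\le0$—and the boundary computation and stopping argument are the same in substance.
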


\begin{proof}
Under assumption \eqref{51},
the generator of the process $\hat X^n$ is given by
\[
\calL^nf(x)
=\begin{cases}
\sum_in\la_i(\pl^{n,i}_+f(x)+\pl^{n,i}_-f(x)) & x\in\calS^n\cap\calS^o,\\
\sum_in\la_i\pl^{n,i}_+f(x)+n(\la_i+\nu_i)\pl^{n,i}_-f(x)
&x\in\calS^n\cap F_{i^\#},\, i=1,2,\\
\sum_in\la_i\pl^{n,i}_+f(x)
&x=0,
\end{cases}
\]
for $f:\calS^n\to\R$, where we recall the notation $\sum_i=\sum_{i=1}^2$.

	Fix $\veps>0$. Let $a_0 \doteq \veps (\la_1\wedge \la_2)^{-1/2}$
	and $A \doteq (\nu_1 \vee \nu_2) (\la_1\la_2)^{-1/2}$. Define $\Ps:\R_+^2  \to \R$ as
	$$\Ps(x) \doteq a_0^2 -(x_1^2\la_1^{-1} + x_2^2\la_2^{-1} + 2Ax_1x_2 (\la_1\la_2)^{-1/2}), \qquad x\in \R_+^2.$$
	For $x \in \R_+^2$, let $\|x\|_{\la} \doteq (x_1^2\la_1^{-1} + x_2^2\la_2^{-1})^{1/2}$.
	Using the inequality 
	$2x_1x_2 (\la_1\la_2)^{-1/2} \le x_1^2\la_1^{-1} + x_2^2\la_2^{-1}$ we see that
	$$\Ps(x) > 0 \quad \mbox{ when }\quad \|x\|_{\la} < a_0 (1+A)^{-1/2}.$$
	Now, for $x\in\calS^n\cap\calS^o$, we have
	$$\calL^n\Ps(x)=\sum_i n\la_i(\pl^{n,i}_+\Ps(x)+\pl^{n,i}_-\Ps(x)) = -4.$$
	For $x\in\calS^n\cap F_1$,
	\begin{align*}\calL^n\Ps(x)&=\sum_in\la_i\pl^{n,i}_+\Ps(x)
	+ n(\la_2+\nu_2)\pl^{n,2}_-\Ps(x)\\
	&= -n\Big(3n^{-1} + 2Ax_2 n^{-1/2}\frac{\la_1^{1/2}}{\la_2^{1/2}}\Big) -n\frac{\nu_2}{\la_2}(n^{-1} -2n^{-1/2}x_2)\\
	& \le -3,
	\end{align*}
	where the last inequality follows on recalling that $A \ge \nu_2(\la_1\la_2)^{-1/2}$.
	Similarly, for $x\in\calS^n\cap F_2$, $\calL^n\Ps(x)\le-3$.
	Finally,
	$$\calL^n\Ps(0)=\sum_in\la_i\pl^{n,i}_+\Ps(0) = -2.$$
	Fix $A_0 < (1+A)^{-1/2}$. Let $\gamma^n_{A_0 a_0} \doteq \inf\{t\ge 0: \|\hat X^n(t)\|_{\la} \ge A_0 a_0\}$.
	Then  for $x \in \calS^n$ with $\|x\|_{\la} \le A_0a_0$,
	$$\Ps(\hat X^n(t \wedge  \gamma^n_{A_0 a_0})) = \Ps(x) + \int_0^{t\wedge  \gamma^n_{A_0 a_0}} \calL^n\Ps(\hat X^n(s)) ds + M^n_t \le \Ps(x) -2(t \wedge \gamma^n_{A_0 a_0}) + M^n_t,$$
	where, under $\PP_x$, $M^n$ is a martingale starting at $0$.
	We can find $n_0 \in \NN$ such that for all $n\ge n_0$, and $x \in \calS^n$ with $\|x\|_{\la} \le A_0 a_0$, we have that
	$\|x+ n^{-1/2}e^{(i)}\|_{\la} < (1+A)^{-1/2}a_0$.
	Taking expectations, and noting that $\Ps(\hat X^n(t \wedge \gamma^n_{A_0 a_0}))>0$ when $n\ge n_0$,  we see that, for  $n\ge n_0$,
	$$\E_x(t \wedge  \gamma^n_{A_0 a_0}) \le \frac{1}{2} \Ps(x) \le \frac{1}{2}a_0^2.$$
	Sending $t \to \infty$ and noting that $ \gamma^n_{A_0 a_0} \ge   \st^n_{A_0\veps}$, we now see that, for $n \ge n_0$, 
	$ \E_x( \st^n_{A_0\veps}) \le \frac{1}{2}a_0^2 =\frac{1}{2}\veps^2(\la_1\wedge \la_2)^{-1} $. The result follows on taking $\kappa =A_0 (2(\la_1 \wedge \la_2))^{1/2}$.
	\qed
	\end{proof}

Under assumption \eqref{51}, the candidate limiting RBM has
zero drift. The next lemma is concerned with such a process.
The proof uses hitting time estimates from \cite{vw}, that were developed for a zero drift, unit variance RBM. To this end, it uses the diagonal transformation
$\Sig^{-1}$ to transform an RBM with data $(0,\Sig,\{d^{(i)}\})$
to one with data $(0,\Id,d^{*,(i)}\})$.
We recall from \eqref{eq:thstar}--\eqref{eq:alstar} that
the parameter $\al^*\in(1,2)$ was defined in terms of the latter.

\begin{lemma}\label{lem:r1}
For $x\in\calS$,
let $P_x$ denote the solution to the submartingale problem for
the admissible data $\bD=(0,\Sig,\{d^{(i)}\})$ starting from $x$,
$E_x$ the corresponding expectation.
Then there exist constants
 $\veps_0 >0$, $a>0$ and $c>1$, such that for all $\veps \in (0,\veps_0)$
\[
\sup_{x\in\Sb_{c\eps}} E_x[e^{-\tau^{\veps}}]\le 1-a\eps^{\al_*},
\]
where
$$
\tau^{\veps} = \inf\{t\ge 0: \|Z(t)\| \le \veps\}.
$$
\end{lemma}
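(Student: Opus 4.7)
The plan is to establish the stronger probabilistic bound $P_x(\tau^{\eps}\ge 1)\ge a'\eps^{\al_*}$ for $x\in\Sb_{c\eps}$, which together with the elementary inequality $1-E_x[e^{-\tau^{\eps}}]\ge(1-e^{-1})P_x(\tau^{\eps}\ge 1)$ yields the lemma. To treat a general $\Sig$, I would first apply the diagonal transformation $\Sig^{-1}$ as in the proof of Theorem~\ref{th0}, reducing to the unit-variance case with data $(0,\Id,\{d^{*,(i)}\})$; since $\|Z\|$ and $\|\Sig^{-1}Z\|$ differ only by bounded multiplicative factors, an analogous estimate for the transformed process transfers back after enlarging $c$.

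The key analytic input for the unit-variance problem is the $\al_*$-homogeneous harmonic function from \cite{vw}: there exists $g:\calS\to\R_+$, smooth on $\calS\setminus\{0\}$, with $g(rx)=r^{\al_*}g(x)$ and $c_0\le g|_{\Sb_1}\le C_0$, satisfying $\tfrac12\Delta g=0$ on $\calS^o$ and $d^{*,(i)}\cdot\nabla g=0$ on $F_i\setminus\{0\}$. Since the submartingale-problem solution admits the standard reflected-Brownian-motion semimartingale decomposition on the interval $[0,\tau^{\eps}]$ (a consequence of the wellposedness theory in \cite{vw,lakner2023reflected}), It\^o's formula combined with the properties of $g$ shows that $g(Z(t\wedge\tau^{\eps}\wedge\tau_R))$ is a bounded martingale, where $\tau_R\doteq\inf\{t:\|Z(t)\|\ge R\}$. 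Optional stopping at $\tau^{\eps}\wedge\tau_R$ starting from $\|x\|=c\eps$, combined with the homogeneity bounds $g(x)\ge c_0(c\eps)^{\al_*}$, $g(Z(\tau^{\eps}))\le C_0\eps^{\al_*}$ and $g(Z(\tau_R))\le C_0 R^{\al_*}$, gives
\[
P_x(\tau_R<\tau^{\eps})\ge\frac{c_0 c^{\al_*}-C_0}{C_0 R^{\al_*}}\,\eps^{\al_*}\ge c_1(\eps/R)^{\al_*}
\]
provided $c$ is chosen large enough that $c_0 c^{\al_*}>2C_0$.

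To convert this into the desired lower bound on $P_x(\tau^{\eps}\ge 1)$, I would apply the strong Markov property at $\tau_R$:
\[
P_x(\tau^{\eps}\ge 1)\ge P_x(\tau_R<\tau^{\eps})\cdot\inf_{y\in\Sb_R}P_y(\tau^{\eps}\ge 1).
\]
For a fixed $R$, the infimum on the right is bounded below by a positive constant uniformly in small $\eps$: since $\tau^{\eps}\ge\tau^{R/2}$ whenever $\eps<R/2$, it suffices that $\inf_{y\in\Sb_R}P_y(\tau^{R/2}\ge 1)>0$, which follows from path continuity and compactness of $\Sb_R$. Combining with the previous step and fixing $R$ produces $P_x(\tau^{\eps}\ge 1)\ge a'\eps^{\al_*}$, as required.

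The main technical obstacle is the martingale step. Because $g$ is neither in $C_b^2(\calS)$ nor constant near the origin, the submartingale property of Definition~\ref{def1} does not apply directly to $g$. The cleanest route is to invoke the known semimartingale decomposition of $Z$ on $[0,\tau^{\eps}]$ and apply classical It\^o; the alternative of building a $C_b^2$ truncation $\tilde g$ agreeing with $g$ on $\{\eps/2\le\|x\|\le R+1\}$, constant on $\BB_{\eps/4}$, and preserving $d^{*,(i)}\cdot\nabla\tilde g\ge 0$ on $F_i$, is subtle in the range $\al_*\in(1,2)$: because $d^{*,(i)}_{i^\#}<0$, a naive radial cutoff of $g$ reverses the sign of the boundary derivative on the modification region and violates the admissibility condition of the submartingale problem.
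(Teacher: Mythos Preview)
Your proposal is correct and follows essentially the same route as the paper. Both arguments hinge on the $\al_*$-homogeneous harmonic function $\Phi$ from \cite{vw}, an optional-stopping identity for $\Phi(Z(\cdot\wedge\tau^\eps\wedge\tau_R))$ yielding $P_x(\tau_R<\tau^\eps)\gtrsim\eps^{\al_*}$ for $x\in\Sb_{c\eps}$ with $c$ large, and the strong Markov property to transfer this into a bound on the Laplace transform.

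The only differences are cosmetic. The paper fixes $R=1$, works directly with $E_x[e^{-\tau}]$ via conditioning on $\{\eta<\tau\}$ (writing $E_x[e^{-\tau}]\le 1-(1-\kappa_1)P_x(\eta<\tau)$ with $\kappa_1=\sup_{\|y\|=1}E_y[e^{-\tau}]$), and invokes \cite[(2.13)]{vw} to obtain the martingale identity rather than re-deriving it from the semimartingale decomposition away from the origin. Your route through $P_x(\tau^\eps\ge 1)$ is equivalent. One small point: your justification that $\inf_{y\in\Sb_R}P_y(\tau^{R/2}\ge 1)>0$ ``follows from path continuity and compactness'' needs a continuity-in-$y$ input; the paper supplies this via the Feller property of the submartingale-problem solution established in \cite[Sec.~3.3]{vw}. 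Your closing observation that a $C_b^2$ truncation preserving the boundary inequality is delicate is accurate, and the paper likewise avoids that route by citing \cite{vw} directly.
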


\begin{proof}
We note that in \cite[Sec.\ 3.3]{vw} it is shown that, in the unit variance, zero drift
case, the solution to the submartingale problem
is a strong Markov process and it has the Feller property.
Since one can obtain a solution to the submartingale problem with data  $(0,\Sig,\{d^{(i)}\})$
from one with data $(0,\Id,d^{*,(i)}\})$ via a diagonal transformation, it follows that the former is a Feller strong Markov process as well.

Consider $c>1$ (the value of which is to be determined later in the proof),
let $\eps_0>0$ be such that $c\veps_0 <1$,
and $\veps \in (0, \veps_0)$. 
Fix $x \in \RR_+^2$ with $\|x\| = c\veps$. Denote
$\st=\inf\{t:\|Z(t)\|\ge 1\}$. In this proof we
suppress $\veps$ in the notation for $\tau^{\veps}$.
One has $\st\w\tau<\iy$ $P_x$-a.s., as follows from
\cite[eq.\ (2.21)]{vw} in the unit variance case, and consequently
in our case as well.
Let, for $t\ge 0$, $\calF_t \doteq \sigma\{Z(s): s \le t\}$.
Then, using strong Markovity,
\begin{align*}
E_x[e^{-\tau}] & \le E_x[\ONE_{\{\tau<\st\}}+e^{-\tau}\ONE_{\{\st<\tau\}}]
\\
&\le P_x(\tau<\st)+E_x[E_x[e^{-(\tau-\st)}\ONE_{\{\st<\tau\}}|\calF_\st]]
\\
&=P_x(\tau<\st)+E_x[\ONE_{\{\st<\tau\}}E_{Z(\st)}[e^{-\tau}]]
\\
&\le P_x(\tau<\st)+P_x(\st<\tau)\sup_{\|y\|=1}E_y[e^{-\tau}]
\\
&=1-P_x(\st<\tau)+P_x(\st<\tau)\kappa_1
\\
&=1-aP_x(\st<\tau),
\end{align*}
where $\kappa_1=\sup_{\|y\|=1}E_y[e^{-\tau}]$ and
$a=1-\kappa_1$.
To prove that $\kap_1<1$, argue by contradiction and assume $\kap_1=1$.
Then by the Feller property, there exists $y$, $\|y\|=1$
such that $\tau=0$ $P_y$-a.s.,
contradicting sample path continuity of $Z$. It follows that $\kap_1<1$
and $a>0$.
 
Next, let $Y \doteq \Sig^{-1}Z$ and note that its law
is a solution to the submartingale problem with data
$(0,\Id,\{d^{*,(i)}\})$ starting from $\Sig^{-1}x$.
Let $\Ph: \R_+^2 \to \R_+$ be the test function from \cite{vw}
defined as
\[
\Ph(y)=\|y\|^{\al_*}\cos(\al_*\theta-\theta^{(1)}_*),
\]
where $y= (\|y\|\cos \theta, \|y\| \sin \theta)$.
Then the proof given in \cite{vw} to equation \cite[(2.13)]{vw}, which regards
a different pair of stopping times, applies to $(\eta,\tau)$ thanks to the fact
that these stopping times on the filtration generated by $Z$
are also stopping times on the filtration generated by $Y$,
as the two filtrations are equal. It gives
\[
E_x[\Ph(Y(\st))\ONE_{\{\st<\tau\}}+\Ph(Y(\tau))\ONE_{\{\tau<\st\}}] = \Ph(\Sig^{-1}x).
\]
Note that, for $\theta\in[0,\pi/2]$,
\[
\cos(\al_*\theta-\theta_*^{(1)})\ge \kappa_2 \doteq\cos(|\theta^{(1)}_*|\vee|\theta^{(2)}_*|)>0.
\]
Therefore, with $\kappa_3 \doteq \kappa_2 (\sigma_1 \vee \sigma_2)^{-\alpha_*}$
\[
\kappa_3 \|x\|^{\alpha_*} \le \kappa_2\|\Sig^{-1} x\|^{\alpha_*} \le \Ph(\Sig^{-1}x) \le (\sigma_1 \wedge \sigma_2)^{-\alpha_*}(P_x(\st <\tau) + \veps^{\al_*}).
\]
Recalling that $\|x\| = c\veps$, we have
$$
P_x(\st <\tau)  \ge \kappa_3 c^{\alpha_*} \veps^{\alpha_*}(\sigma_1 \wedge \sigma_2)^{\alpha_*} - \veps^{\al_*}.
$$
Choose $c>1$ such that
$\kappa_3 c^{\alpha_*}(\sigma_1 \wedge \sigma_2)^{\alpha_*}\ge2$ to obtain
$P_x(\st <\tau) \ge \veps^{\al_*}$. The result follows.
\hfill \qed
\end{proof}

\subsection{A sequence of upcrossings and downcrossings}\label{sec42}

The goal of this subsection is to prove Proposition \ref{prop:zeropre} below,
which gives a bound on the expected cumulative time that $\{\hat X^n(t), t\in[0,S]\}$
spends in $\BB_\eps$ in the special case when \eqref{51} holds.
This is done by constructing successive upcrossings and downcrossings
of $\|\hat X^n\|$ and using the estimates from Section \ref{sec41}.
Lemma \ref{lem:momsig} is directly applicable to individual
upcrossings, whereas Lemma \ref{lem:r1} regards an RBM,
and transforming it to an estimate on downcrossing durations
of $\|\hat X^n\|$, as in Lemma \ref{lem11} below, involves a convergence argument.

\begin{proposition}
	\label{prop:zeropre}
	Assume \eqref{51} holds. Then for all $S<\infty$,
	$$\lim_{\veps \to 0}\limsup_{n\to \infty} \E_{\hat x^n}\int_0^{S}
	\one_{\BB_\eps}(\hat X^n(t)) dt =0.$$
\end{proposition}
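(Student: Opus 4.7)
The plan is to decompose the trajectory of $\hat X^n$ into alternating upcrossings of $\|\hat X^n\|$ from $\le\eps$ to $\ge c\eps$ and downcrossings back, where $c>1$ is the constant from Lemma \ref{lem:r1}. Set $\sigma_0^n = \inf\{t\ge 0: \|\hat X^n(t)\|\le\eps\}$ and, for $k\ge 0$,
\[
\rho_k^n = \inf\{t\ge\sigma_k^n: \|\hat X^n(t)\|\ge c\eps\},\qquad \sigma_{k+1}^n = \inf\{t\ge\rho_k^n: \|\hat X^n(t)\|\le\eps\}.
\]
Then $\{t\in[0,S]: \hat X^n(t)\in\BB_\eps\} \subseteq \bigcup_{k\ge 0}[\sigma_k^n,\rho_k^n]\cap[0,S]$. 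Writing $N^n=\#\{k\ge 0:\sigma_k^n\le S\}$ and applying the strong Markov property at each $\sigma_k^n$ together with Lemma \ref{lem:momsig} (used with $c\eps/\kap$ in place of $\eps$) furnishes a constant $C$ such that
\[
\E_{\hat x^n}\Big[\int_0^S \one_{\BB_\eps}(\hat X^n(t))\,dt\Big] \le C\eps^2\,\E_{\hat x^n}[N^n+1].
\]

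Controlling $\E[N^n]$ requires a prelimit version of Lemma \ref{lem:r1}: there exist $a'>0$, $\eps_0>0$ such that for $\eps\in(0,\eps_0)$ and $n$ sufficiently large,
\[
\sup_{x}\E_x\big[e^{-\tau^{n,\eps}}\big] \le 1-a'\eps^{\al_*},\qquad \tau^{n,\eps}\doteq\inf\{t:\|\hat X^n(t)\|\le\eps\},
\]
where the supremum is over $x\in\calS^n$ with $\|x\|\in[c\eps,c\eps+n^{-1/2}]$ (the possible positions of $\hat X^n(\rho_k^n)$). I would prove this by a weak convergence argument: on $[0,\tau^{n,\eps}]$ the process $\hat X^n$ stays in $\calS\setminus\BB_\eps$, so it touches at most one face at a time, and each transition between $F_1$ and $F_2$ requires an interior gap of Euclidean length at least of order $\eps$; combined with the $C$-tightness from Proposition \ref{prop:tight}, the number of such transitions on $[0,S]$ is tight. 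Within each face excursion, $\hat X^n$ is representable via the constant-direction Skorohod map $\Gam^{d^{(i),n}}_{\R_+\times\R}$ or $\Gam^{d^{(i),n}}_{\R\times\R_+}$, whose Lipschitz bound \eqref{32} together with convergence of $\hat Y^n$ to a $(b,\Sig)$-BM forces weak convergence of individual face segments. Patching across the bounded number of face-to-face switches and identifying any subsequential limit as the RBM via well-posedness (Theorem \ref{th0}) yields $\hat X^n(\cdot\wedge\tau^{n,\eps})\To X(\cdot\wedge\tau^\eps)$; continuity of $e^{-\tau^\eps(\cdot)}$ at the RBM limit then transfers Lemma \ref{lem:r1} to the prelimit (with $a'$ slightly smaller than $a$).

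Given the prelimit downcrossing estimate, iterated strong Markovity at the $\rho_k^n$ gives $\E\big[\exp(-\sum_{k=0}^{m-1}(\sigma_{k+1}^n-\rho_k^n))\big]\le(1-a'\eps^{\al_*})^m$. Since $\{N^n\ge m\}\subseteq\{\sum_{k=0}^{m-1}(\sigma_{k+1}^n-\rho_k^n)\le S\}$, Markov's inequality yields $\PP(N^n\ge m)\le e^S(1-a'\eps^{\al_*})^m$ and hence $\E[N^n]\le e^S/(a'\eps^{\al_*})$. Combining with the upcrossing bound,
\[
\limsup_{n\to\infty}\E_{\hat x^n}\Big[\int_0^S\one_{\BB_\eps}(\hat X^n(t))\,dt\Big]\le C_S\,\eps^{2-\al_*}\xrightarrow[\eps\to 0]{}0,
\]
because $\al_*\in(1,2)$. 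The main obstacle is the weak convergence argument underlying the prelimit downcrossing estimate: one must control the random number of face-to-face switches and patch Skorohod-map approximations across them, and verify that hitting-time continuity at the RBM limit is preserved---a step I would expect to be carried out in a dedicated lemma.
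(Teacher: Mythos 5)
Your proposal is correct in outline and uses the same essential ingredients as the paper: the alternating upcrossing/downcrossing decomposition, Lemma \ref{lem:momsig} for individual upcrossings, the RBM downcrossing estimate of Lemma \ref{lem:r1}, convergence of downcrossing segments to absorbed RBM segments via patched constant-direction Skorohod maps, and an exponential Chebyshev bound on the number of cycles (your $\eps^{2-\al_*}$ conclusion matches the paper's, which fixes $\beta\in(\al_*,2)$ and gets $\eps^{2-\beta}+S\exp\{S-a\eps^{-(\beta-\al_*)}\}$). Where you genuinely diverge is in how the downcrossing estimate is deployed. You propose to transfer Lemma \ref{lem:r1} to the prelimit, obtaining a bound on $\sup_x\E_x[e^{-\tau^{n,\eps}}]$ uniform over starting points near $\Sb_{c\eps}$ and over large $n$, and then iterate the strong Markov property of $\hat X^n$ itself at the times $\rho^n_k$. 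The paper instead keeps Lemma \ref{lem:r1} at the limit level: it extends each downcrossing segment by an independent auxiliary random walk so that the driving noises $\hat W^n_k$ form an independent family, passes the whole collection $(\hat\xi^n_k,\hat W^n_k,\hat U^n_k)_{k}$ jointly to the limit (Lemma \ref{lem:ctymap}), and performs the successive conditioning on the limit objects, where Markovity of the limit is not yet available but the constructed independence \eqref{eq:indprop} substitutes for it (Lemma \ref{lem11}). Your route buys a cleaner iteration (genuine strong Markovity rather than a hand-built independence structure) at the price of having to prove a \emph{uniform} prelimit estimate; the paper's route avoids uniformity in the starting point but requires the extension/independence construction.

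Two caveats on your version. First, the uniform prelimit estimate needs a compactness argument over arbitrary starting points $x_n$ with $\|x_n\|\approx c\eps$ (not just the fixed initial data $\hat x^n$), together with lower semicontinuity of the hitting time of $\bar\BB_\eps$ along continuous limits; this is doable but is exactly where the work lives, and you rightly flag the patching lemma (the paper's Lemma \ref{lem:ctymap} and Appendix \ref{sec:A}) as the missing piece. Note also that the martingale correction coming from the boundary-dependent compensator of $\hat D^n$ must be shown to vanish on each downcrossing, which is what Lemma \ref{lem:allthree} provides (since $T^n_0$ is constant there). Second, you should not identify the subsequential limit of the stopped segments ``via well-posedness (Theorem \ref{th0})'': the absorbed segment is not a solution of the submartingale problem on $\calS$, and invoking that theorem here would be out of place (the corner property is precisely what is being established). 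The correct identification is pathwise, through the reflection/absorption map $\La^\eps$ applied to the limiting Brownian driver, as in Lemma \ref{lem:ctymap}(ii).
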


Throughout what follows, fix $\eps_0>0$, $a>0$ and $c>1$ as in Lemma \ref{lem:r1}.
For $\veps \in (0, \veps_0)$, consider the following sequence of stopping times for each $n \ge n_0$. Namely,
$\gamma_{-1}^n=0$, and for $k \in \Z_+$,
\begin{equation}\label{eq:1143}
\begin{aligned}
	\gamma^n_{2k} &= \inf\{t \ge \gamma^n_{2k-1}: \|\hat X^n(t)\| \le  \veps\},\\
	\gamma^n_{2k+1} &= \inf\{t \ge \gamma^n_{2k}: \|\hat X^n(t)\| \ge c \veps\},
\end{aligned}
\end{equation}
suppressing here, and in the entire construction below, the dependence
on $\eps$.

\begin{lemma}\label{lem11}
Assume \eqref{51} holds.
Let $N\in\N$ and $S\in(0,\iy)$. Then
\begin{align*}
	\limsup_{n\to \infty} \PP\Big(\sum_{k=1}^N (\gamma^{n}_{2k} - \gamma^{n}_{2k-1}) \le S\Big)
\le \exp\{S-a N\eps^{\al_*}\}.
\end{align*}
\end{lemma}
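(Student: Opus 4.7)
The plan is to combine an exponential Chernoff bound with the strong Markov property of $\hat X^n$, together with a convergence argument that lifts the RBM estimate of Lemma \ref{lem:r1} to the prelimit level. Set $S^n\doteq\sum_{k=1}^N(\gamma^n_{2k}-\gamma^n_{2k-1})$. Markov's inequality applied to $e^{-S^n}$ gives $\PP(S^n\le S)\le e^S\,\E[e^{-S^n}]$. At time $\gamma^n_{2k-1}$ the chain has just jumped above level $c\eps$, so $\hat X^n(\gamma^n_{2k-1})$ lies in the annular strip $\Sb^n_{c\eps}\doteq\{x\in\calS^n:c\eps\le\|x\|<c\eps+n^{-1/2}\}$. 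Writing $\tau^{n,\eps}\doteq\inf\{t\ge0:\|\hat X^n(t)\|\le\eps\}$ and applying strong Markovity iteratively at $\gamma^n_{2k-1}$, $k=N,N-1,\dots,1$, I would obtain $\E[e^{-S^n}]\le q_n^N$, where
\[
q_n\doteq\sup_{x\in\Sb^n_{c\eps}}\E^n_x\!\left[e^{-\tau^{n,\eps}}\right].
\]
The lemma will follow from the claim
\[
\limsup_{n\to\infty}q_n\le 1-a\eps^{\al_*}, \qquad (\ast)
\]
since then $\limsup_n\PP(S^n\le S)\le e^S(1-a\eps^{\al_*})^N\le e^{S-aN\eps^{\al_*}}$.

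\textbf{Proof of $(\ast)$ by contradiction.} Suppose there are a subsequence $(n_k)$ and $x_{n_k}\in\Sb^{n_k}_{c\eps}$ such that $\E^{n_k}_{x_{n_k}}[e^{-\tau^{n_k,\eps}}]\to L>1-a\eps^{\al_*}$. By compactness of $\Sb_{c\eps}$, extract a further subsequence (not relabeled) so that $x_{n_k}\to x^*\in\Sb_{c\eps}$. By Proposition \ref{prop:tight} the laws of $\hat X^{n_k}$ (with the deterministic initial conditions $x_{n_k}$) are $C$-tight; extract again so that $\hat X^{n_k}\To X$ in $C(\R_+,\calS)$ with $X(0)=x^*$. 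The key intermediate claim is that $X(\cdot\wedge\tau^\eps(X))$, with $\tau^\eps(X)\doteq\inf\{t:\|X(t)\|\le\eps\}$, has the law of an RBM $Z$ with data $(0,\Sig,\{d^{(i)}\})$ starting at $x^*$, stopped at $\tau^\eps(Z)$. Indeed, on $[0,\tau^\eps]$ the process is at least $\eps$ away from the corner, hence interacts with at most one face at a time, and between consecutive face-switches admits a representation via the planar Skorohod maps $\Gam^{d^{(i),n_k}}_{\R_+\times\R}$ or $\Gam^{d^{(i),n_k}}_{\R\times\R_+}$ of Section \ref{sec31}. Under \eqref{51} one has $b=0$, so $\hat Y^{n_k}\To$ a $(0,\Sig)$-BM; the number of face-switches inside $[0,\tau^\eps]$ is tight by the argument used for $K^n_1$ in \eqref{40} (two consecutive switches away from the corner require an oscillation of $\hat Y^n$ of order $\eps$ on a short interval); and continuous mapping together with strong Markovity at each switch glues the pieces into the law of $Z$ stopped at $\tau^\eps(Z)$.

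\textbf{Conclusion and main obstacle.} Hitting times of closed sets are lower semicontinuous functionals on $C(\R_+,\calS)$, so through a Skorohod-representation coupling $\liminf_k\tau^{n_k,\eps}\ge\tau^\eps(X)$ a.s., whence $\limsup_k e^{-\tau^{n_k,\eps}}\le e^{-\tau^\eps(X)}$ a.s. The reverse Fatou lemma (valid since integrands lie in $[0,1]$) yields
\[
L=\lim_k\E\!\left[e^{-\tau^{n_k,\eps}}\right]\le\E\!\left[e^{-\tau^\eps(X)}\right]=E_{x^*}\!\left[e^{-\tau^\eps(Z)}\right]\le 1-a\eps^{\al_*}
\]
by Lemma \ref{lem:r1}, contradicting $L>1-a\eps^{\al_*}$. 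The main obstacle is the identification of $X(\cdot\wedge\tau^\eps(X))$ with the stopped RBM $Z(\cdot\wedge\tau^\eps(Z))$: it hinges on the tightness of the face-switch count on a single downcrossing and a clean gluing of the Skorohod-map representations across those switches, while handling both the $n^{-1/2}$ discrepancy between $\|x_{n_k}\|$ and $c\eps$ and the non-continuity of hitting times (which is why LSC plus reverse Fatou, rather than outright convergence $\tau^{n_k,\eps}\to\tau^\eps(X)$, is the correct tool).
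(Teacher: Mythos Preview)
Your overall strategy is correct and in one respect cleaner than the paper's. The paper does \emph{not} iterate the strong Markov property at the prelimit level; instead it extends each downcrossing excursion $\hat X^n|_{[\gamma^n_{2k+1},\gamma^n_{2k+2}]}$ to all of $[\gamma^n_{2k+1},\infty)$ by splicing in auxiliary, mutually independent $n$-SRWs $\hat W^{*,n}_k$, passes to a \emph{joint} limit of all $N$ excursions, verifies that the limiting Brownian drivers $W_k$ inherit the conditional independence \eqref{eq:indprop}, and only then performs the successive conditioning against Lemma~\ref{lem:r1}. Your reduction $\E[e^{-S^n}]\le q_n^N$ via strong Markovity of the CTMC $\hat X^n$ collapses all of this to the study of a single downcrossing started near $\Sb_{c\eps}$; the factorization is already done before any limit is taken. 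That is a genuine simplification.

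Where your sketch is loose is the identification step. The assertion ``$\hat Y^{n_k}\To(0,\Sig)$-BM'' is not automatic: $\hat Y^n$ contains $\hat D^n$, whose quadratic variation depends on the boundary occupation times $T^n_{i^\#}$, so its Brownian limit is contingent on those times vanishing---which is exactly what has to be proved. The paper resolves this by switching to the decomposition \eqref{60}--\eqref{60+}: there the driver $\hat W^n=\hat A^n-\mathring D^n$ is a bona fide $n$-SRW (so converges to a $(0,\Sig)$-BM with no side conditions), and the discrepancy $\mathring M^n$ is shown to vanish on a downcrossing interval via Lemma~\ref{lem:allthree} (using $T^n_0\equiv0$ there). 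Your face-switch/gluing outline is essentially the recursive construction of the map $\La^\eps$ in Appendix~\ref{sec:A}, and making it rigorous amounts to Lemma~\ref{lem:ctymap}. Once you invoke those tools in place of the bare claim about $\hat Y^{n_k}$, your LSC/reverse-Fatou finish with Lemma~\ref{lem:r1} goes through exactly as written.
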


In preparation to proving Lemma \ref{lem11}
we state and prove Lemma \ref{lem:ctymap} below. The proof of Lemma~\ref{lem11}
and Proposition \ref{prop:zeropre} appear afterwards.

Fix $0<c_0<1$ throughout what follows.
For any given $f \in D(\R_+, \R^2)$ with finitely many jumps in any compact time interval, satisfying
\[
\sup_{t\in(0,\iy)}\|f(t)-f(t-)\|\le c_0\eps/4
\quad \text{ and } \quad \|f(0)\| > c_0\veps,
\]
we can construct recursively a unique path $g \in D(\R_+, \R_+^2)$
reflected obliquely on $\pl\calS$, along the direction $d^{(i)}$ on face $F_i$, $i=1,2$,
and absorbed when first visiting $\calS\cap\BB_{c_0\eps}$.
Although quite standard, we provide a construction of such a path in Appendix 
\ref{sec:A}.
We will denote this reflected/absorbed path $g$ obtained from $f$ as $\La^{\veps}(f)$.
The following result gives a continuity property and a relation to the submartingale problem.
\begin{lemma}\label{lem:ctymap}
(i) Let $\{Y^n\}$ be a sequence of processes with sample paths in $D(\R_+, \R^2)$
such that for each $n$, a.s.,
$Y^n$ has finitely many jumps in any compact interval,
$\sup_{t\in(0,\iy)}\|Y^n(t)-Y^n(t-)\|\to0$ as $n\to\iy$, and $Y^n(0) \in \Sb_{c\eps}$;
in particular, $\La^\eps(Y^n)$ is well defined for all large $n$.
Suppose moreover that $Y^n \to \xi+ W$, a.s., uniformly on compacts, where
$\xi$ is an $\Sb_{c\eps}$-valued random variable and $W$ is a $(0, \Sig)$-BM
independent of $\xi$. Then
	$U^n \doteq \La^{\eps}(Y^n)$ converges a.s., uniformly on compacts, to $U \doteq \La^{\eps}(\xi+W)$.
\\
(ii) For any $x\in\calS$, $\|x\|>c_0\eps$, the process $U=\La^\eps(x+W)$
is equal in distribution to the process $Z(\cdot\w\tau^\eps)$
under the solution to the submartingale
problem with data $(0,\Sig,\{d^{(i)}\})$ starting at $x$, where
$\tau^\eps=\inf\{t:\|Z(t)\|\le c_0\eps\}$.
\end{lemma}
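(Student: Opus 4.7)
My plan is a segment-by-segment induction. By construction, $\La^\eps(f)$ evolves as free motion in $\calS^o$ or as the image of the half-plane Skorohod map $\Gam^{d^{(i)}}_{\R_+\times\R}$ (or $\Gam^{d^{(i)}}_{\R\times\R_+}$) applied to the driving path while on face $F_i$, and is frozen upon first reaching $\bar\BB_{c_0\eps}$. Since $\dist\bigl(F_1\setminus\BB_{c_0\eps},\,F_2\setminus\BB_{c_0\eps}\bigr)\ge c_0\eps\sqrt2>0$, any continuous path makes only finitely many switches between the two faces before absorption in any bounded time interval. Let $0\le\tau_1<\tau_2<\cdots$ denote the successive switching and absorption times of $U=\La^\eps(\xi+W)$, and let $\tau^n_k$ be the analogues for $U^n=\La^\eps(Y^n)$. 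I would induct on $k$, showing that a.s.\ $\tau^n_k\to\tau_k$ and $\sup_{t\le T\w\tau_k}\|U^n(t)-U(t)\|\to0$ for fixed $T>0$: on each individual segment the oscillation bound \eqref{32} gives Lipschitz dependence on the driving path, and the hypothesis $Y^n\to\xi+W$ uniformly on compacts with vanishing jump sizes delivers uniform convergence on the segment, while non-tangential hitting of the closed set $F_{i^\#}\cup\pl\BB_{c_0\eps}$ by the continuous reflected path $U$ gives convergence of the hitting times and hence of the switching indices.

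\textbf{Strategy for part (ii).} Let $V=\La^\eps(x+W)$, so that $x+W$ is a $(0,\Sig)$-BM starting at $x$. I would use (a) the strong Markov property of the submartingale solution $P_x$ mentioned in the proof of Lemma \ref{lem:r1} (established in \cite[Sec.\ 3.3]{vw} for $(0,\Id)$ data and transferred to $(0,\Sig)$ via the $\Sig^{-1}$ scaling from the proof of Theorem \ref{th0}), and (b) uniqueness of the oblique SRBM on a half plane, to identify $V$ with $Z(\cdot\w\tau^\eps)$ segment by segment. On the first segment $[0,\tau_1]$, before $Z$ reaches the second face or $\bar\BB_{c_0\eps}$, $Z$ under $P_x$ is an oblique SRBM on a half plane, uniquely characterized by its submartingale problem on that smooth-boundary domain (Stroock--Varadhan). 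The pathwise Skorohod construction $\Gam^{d^{(i)}}_{\R_+\times\R}(x+W)$ produces this unique law, so $V$ and $Z(\cdot\w\tau_1)$ agree in law on $[0,\tau_1]$. Strong Markov applied at each subsequent switching time propagates the equality to all segments before $\tau^\eps$, yielding $V\sim Z(\cdot\w\tau^\eps)$.

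\textbf{Main obstacle.} The most delicate point is the a.s.\ continuity of the switching times $\tau_k$ as functionals of the driving path in part (i). I must rule out tangential encounters of $U$ with $F_{i^\#}\cup\pl\BB_{c_0\eps}$, as such encounters would allow $U^n$ to miss the hit set and force $\tau^n_k\not\to\tau_k$. This follows from the non-degeneracy of $\Sig$ (so that each coordinate of $W$ is a genuine BM with no isolated hits of a hyperplane) together with standard regularity of the oblique reflected BM on a half plane, but the combination with the recursive absorbing construction requires some care.
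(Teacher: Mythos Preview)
Your proposal is correct and follows essentially the same approach as the paper: a recursive segment-by-segment argument exploiting continuity of the half-plane Skorohod maps together with a.s.\ continuity of hitting times of closed sets for the nondegenerate reflected BM, and for part (ii) the identification of $\La^\eps(x+W)$ with the stopped submartingale solution via the half-plane RBM characterization. You have in fact spelled out more carefully than the paper does the delicate point about non-tangential hitting, which the paper dispatches as a ``standard fact'' about nondegenerate planar RBM in a half space.
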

\begin{proof}
	i. Assume without loss that the above processes are defined
	on $(\Om, \calF, \PP)$.
	Let $\Om_0 \in \calF$ be the full measure set  on which
	all the a.s.\ properties in the statement of the lemma hold (for every $n$). Then there exist $n_0  \in \NN$, and for each 
	$\om \in \Om_0$   and  $n\ge n_0$, $k_n \in \Z_+\cup\{\iy\}$,
	$0\le\sig^n_1< \sig^n_2 <\cdots < \sig^n_{k_n-1} < \sig^n_{k_n}$
	(precisely defined in Appendix \ref{sec:A})
	such that $\sig^n_1$ is the first time $U^n$ hits one of the faces
	$F_i$, $i=1,2$, and for $2\le N \le k_{n}-1$,
	$\sig^n_N$ is the first time after $\sig^n_{N-1}$ when $U^n$ hits a face distinct from the one it hit at $\sig^n_{N-1}$,
and finally at $\sig^n_{k_n}$, it first hits $\BB_{c_0\eps}$,
after which it is absorbed ($k_n=\iy$ corresponds to never visiting $\BB_{c_0\eps}$).
	Denote the analogous sequence for $U$ by $0\le\sig_1< \sig_2 <\cdots < \sig_{k}$. All these quantities depend on $\om$ which is suppressed in the notation.
	We will use the standard fact that the hitting time of a closed set is a continuous function of the trajectory at $\PP$-a.e.\ path of  a nondegenerate two-dimensional RBM in the half space.
Fix $S_0\in(0,\iy)$.
	Then using this result, by a recursive argument, it follows that,
	for $\om$ in a full measure set $\Om_1 \subset \Om_0$, there is an $n_1 \ge n_0$, such that for all $n \ge n_1$,
$\sig^n_j\w S_0 \to \sig_j\w S_0$, and $\sup_{t \in [0, \sig_j\w S_0]} \|U^n(t)-U^n(t)\| \to 0$ for all $j \le k$. The result follows.

ii. This is immediate from the fact that $U$ is a $(0,\Sig)$-BM
that reflects on $\pl\calS$
in the direction $d^{(i)}$ on face $F_i$, $i=1,2$, and gets absorbed in $\BB_{c_0\eps}$.
\qed
\end{proof}

\noi{\bf Proof of Lemma \ref{lem11}.}
In this proof, a continuous time Markov process on $(n^{-1/2}\Z)^2$
with generator $\sum n\la_i(\pl^{n,i}_++\pl^{n,i}_-)$, starting at $0$,
will be called an {\it $n$-simple random walk} ($n$-SRW).
Due to the special choice of parameters in \eqref{51}, note that
$\la^n_i=\mu^n_i=\mu_in$ and $b^n=0$. Hence by the definitions
\eqref{093}, \eqref{eq:circd} and \eqref{60+}
of $\hat A^n$, $\mathring{D}^n$ and $\hat W^n$,
it is seen that $\hat W^n$ is an $n$-SRW.

The statement of the lemma is concerned with the lengths of the time intervals $[\gamma^{n}_{2k+1}, \gamma^{n}_{2k+2}]$, which are determined by the paths of $\hat X^n$. However, the proof will use a construction that, for each $k$, extends
the paths $\hat X^n|_{[\gamma^{n}_{2k+1}, \gamma^{n}_{2k+2}]}$
to $[\gamma^{n}_{2k+1}, \infty)$ in a way that depends
on $k$ and is distinct from $\hat X^n$.
For this extension we introduce $\{\hat W^{*,n}_k, k\in\Z_+\}$,
which is a collection of mutually independent $n$-SRWs,
independent of the processes $\calA_i,\calD_i$ that were
used for constructing $\hat X^n$ in \eqref{eq:220}.

To construct the extension
we go back to equations \eqref{60}--\eqref{60+}.
Under \eqref{51}, we also have $d^{(i),n} = d^{(i)}$.
Moreover, by \eqref{eq:1143}, for $t \in [\gamma^{n}_{2k+1}, \gamma^{n}_{2k+2}]$,
$T^n_0(t)=T^n_0(\gamma^{n}_{2k+1})$.
Let $\Th_k=\Th^n_{\eps,k}$ denote the shift operator acting
on $D(\R_+,\R^2)$ as
\[
\Th_k\zeta(t)=\zeta(\gamma^n_{2k+1}+t)-\zeta(\gamma^n_{2k+1}),
\qquad t\ge0.
\]
Then, denoting
$\hat\xi^n_k=\hat X^n(\gamma^n_{2k+1})$,
we obtain from \eqref{eq:circd}--\eqref{60+},
\[
\hat X^n(\gamma^n_{2k+1}+t)=\hat\xi^n_k+\Th_k\Big\{\hat W^n
+\mathring{M}^n+\sum_in^{1/2}\mu_id^{(i)}T^n_i\Big\}(t),
\qquad t\in[0,\gamma^n_{2k+2}-\gamma^n_{2k+1}].
\]
Since $c_0\eps<\eps$, this gives for all large $n$,
\begin{equation}
\label{090}
\hat X^n(\gamma^n_{2k+1}+t)
=\La^\eps(\hat\xi^n_k+\Th_k\hat W^n+\Th_k\mathring{M}^n)(t),
\qquad t\in[0,\gamma^n_{2k+2}-\gamma^n_{2k+1}].
\end{equation}
Moreover, since $\gamma^n_{2k+1}$ are stopping times
on the filtration generated by $(\hat X^n,\hat W^n)$,
and the latter forms a strong Markov process,
it follows that $\Th_k\hat W^n$ are also $n$-SRW for each $k$.
We now define, for each $k$, new processes that agree with
$\hat X^n(\gamma^n_{2k+1}+\cdot)$, $\Th_k\hat W^n$
and $\Th_k\mathring{M}^n$ on $[0,\gamma^n_{2k+2}-\gamma^n_{2k+1}]$,
thus, in particular, respect relation \eqref{090}, but may differ from them on
$(\gamma^n_{2k+2}-\gamma^n_{2k+1},\iy)$.
Namely, for $k \in \Z_+$, define
\begin{align*}
\hat W^{n}_{k}(t) 	\doteq 
\begin{cases}
	\Th_k\hat W^n(t), &\mbox{ for } t \in [0, \gamma^{n}_{2k+2}- \gamma^{n}_{2k+1}]\\
	\hat W^n(\gamma^{n}_{2k+2} ) - \hat W^n(\gamma^{n}_{2k+1} )+\hat W^{*,n}_k(t-\gamma^{n}_{2k+2}+\gamma^n_{2k+1})
	&\mbox{ for } t \in (\gamma^{n}_{2k+2}-\gamma^n_{2k+1},\iy).
\end{cases}	
\end{align*}
Noting that both $\Th_k\hat W^n$ and $\hat W^{n,*}_k$ are $n$-SRW
and $(\hat W^n,\gamma^n_{2k+2})$ are independent of
$\hat W^{n,*}_k$, it follows that $\hat W^{n}_k$ is an $n$-SRW
for each $k$. It is also clear from the construction that, for $k\in \Z_+$,
\begin{equation}\label{091}
	\hat W^{n}_k \mbox{ is independent of } (\{\hat W^{n}_j, j<k\}, \{\hat\xi^n_j, j \le k\}).
\end{equation}
If we now let
\begin{align}\label{092}
 \hat M^{n}_k(t) \doteq \mathring{M}^n((\gamma^{n}_{2k+1} +t) \wedge \gamma^{n}_{2k+2})- \mathring{M}^n(\gamma^{n}_{2k+1}),
\qquad t\ge0,
\end{align}
then by \eqref{090}, we have
\[
\hat X^n(\gamma^n_{2k+1}+t)=\hat U^n_k(t),
\qquad t\in[0,\gamma^n_{2k+2}-\gamma^n_{2k+1}],
\]
where
\begin{equation}\label{096}
\hat U^n_k(t)=\La^\eps(\hat\xi^n_k+\hat W^{n}_k+\hat M^n_k)(t),
\qquad t\in\R_+.
\end{equation}
Denoting $\tau^{n}_k \doteq \inf\{t\ge 0:  \|\hat U^{n}_k(t)\| \le \veps\}$,
we see that $\tau^{n}_k = \gamma^{n}_{2k+2}- \gamma^{n}_{2k+1}$.
Whereas, for $t\in[0,\gamma^n_{2k+2}-\gamma^n_{2k+1}]$, \eqref{096} expresses the same relation as \eqref{090},
the construction achieves in addition an independence structure
for $\hat W^n_k$, namely \eqref{091}, which $\Th_k\hat W^n$
do not possess.

Toward taking the $n$ limit, note that $\{\hat\xi^n_k, k \in \Z_+\}$ take values in a compact set and so this sequence is automatically tight. As $n$-SRW,
$\hat W^{n}_k$ converge in distribution, as $n\to\iy$,
to $W_k$ which is a
$(0,\Sig)$-BM. Furthermore, if along some convergent subsequence $\{(\xi_k, W_k), k \in \Z_+\}$ denotes the weak limit of $\{(\hat\xi^n_k,  \hat W^{n}_k ), k \in \Z_+\}$, then $\|\xi_k\| = c\veps$ a.s., for every $k$ and moreover,
the dependence structure \eqref{091} transfers to the limit,
namely, for $k\in \Z_+$,
\begin{equation}\label{eq:indprop}
	W_k \mbox{ is a $(0,\Sig)$-BM independent of } (\{W_j, j<k\}, \{\xi_j, j \le k\}).
\end{equation}

Next, using the fact that $T^n_0(\gamma^{n}_{2k+2})=T^n_0(\gamma^{n}_{2k+1})$ we apply Lemma \ref{lem:allthree}
with $s^n=s^n_k=\gamma^{n}_{2k+1}$ and $t^n=t^n_k=(s^n_k+S_0)\w\gamma^{n}_{2k+2}$,
for a fixed finite $S_0$.
It is clear from \eqref{eq:1143} that $n^{-1/2}\|\hat X^n(s^n_k)\|\to0$ in probability.
Hence Lemma \ref{lem:allthree} shows that
$T^n_{12}(t^n_k)-T^n_{12}(s^n_k)\to0$ in probability.
By \eqref{092} and \eqref{eq:1150},
$\EE\{[\hat M^{n}_k](S_0)\}\le C\EE\{T^n_{12}(t^n_k)-T^n_{12}(s^n_k)\}$ for some constant
$C$.
Noting that $T^n_{12}(t^n_k)-T^n_{12}(s^n_k)\le S_0$ shows
that $\EE\{[\hat M^{n}_k](S_0)\}\to0$, hence
by the Burkholder-Davis-Gundy inequality and the fact that $S_0$
is arbitrary, one has for $k\in\Z_+$,
$$\hat M^{n}_k \to 0 \mbox{ in probability, as } n\to \infty.$$
Combining these observations,
we have, along the above subsequence,
$$(\hat\xi^n_k, \hat W^{n}_k, k \in \Z_+)
\Rightarrow (\xi_k , W_k, k \in \Z_+)
\qquad
\text{in}\quad (\calS \times D(\R_+, \R^2))^{\Z_+}.
$$
By appealing to Skorohod's representation theorem we assume without loss of generality that the convergence is a.s. Then, letting $U_k \doteq \La^{\veps}(\xi_k+W_k)$ and using
Lemma \ref{lem:ctymap}.i we now see that, along the subsequence,
\begin{equation}\label{eq:weakcgce}
(\hat\xi^n_k, \hat W^{n}_k, \hat U^{n}_k, k \in \Z_+) \to (\xi_k, W_k, U_k, k \in \Z_+),\quad \text{a.s.}\end{equation}

Let, for $k\in \Z_+$,
\begin{align*}
	\tau_k \doteq \inf\{t\ge 0: \|U_k(t)\| \le \veps\}.
\end{align*}
Then, along the subsequence,
\begin{align*}
	\limsup_{n\to \infty} \PP\Big(\sum_{k=1}^N (\gamma^{n}_{2k} - \gamma^{n}_{2k-1}) \le S\Big)
	&= \limsup_{n\to \infty} \PP\Big(\sum_{k=1}^N \tau^{n}_{k-1}\le S\Big)\\
	&\le \PP\Big(\sum_{k=1}^N \tau_{k-1}\le S\Big),
\end{align*}
where the inequality follows from the  convergence in \eqref{eq:weakcgce}, the continuity of $U_k$,
and the lower semicontinuity, at any continuous trajectory in $D(\R_+, \R^2)$, of the hitting time of a closed set, regarded as a function on $D(\R_+, \R^2)$.
(In fact, as in the proof of Lemma \ref{lem:ctymap}, this hitting time is continuous at $U_k$ a.s., however that is not needed at this part of the proof.)

It remains to show that
$\PP\big(\sum_{k=1}^N \tau_{k-1}\le S\big)
\le e^{S-a N\eps^{\al_*}}$.
For $x\in\calS$, $\|x\|>c_0\eps$, denote
\[
g(x)=\EE[e^{-\tau(x,W)}]
\qquad \text{where}\qquad
\tau(x,W)=\inf\{t\ge0:\|\La^\eps(x+W)(t)\|\le\eps\},
\]
and $W$ is a $(0,\Sig)$-BM.
Let $\calG_k \doteq\sigma\{\xi_j, j \le k\} \vee \sigma\{W_j, j<k\}$.
Then, using \eqref{eq:indprop}, the fact that $\|\xi_k\| = c\veps$ a.s.,
and then Lemma \ref{lem:ctymap}.ii and Lemma \ref{lem:r1},
one has for all $\eps \in (0, \eps_0)$
\begin{equation}\label{eq:condit}
	\E[e^{-\tau_k} \mid \calG_k] =g(\xi_k)\le\sup_{x\in\Sb_{c\eps}}g(x)
	\le 1 - a \veps^{\alpha_*}
	\qquad \text{a.s.}
\end{equation}
Thus by successive conditioning,
\begin{align*}
\PP\Big(\sum_{k=1}^N\tau_{k-1}\le S\Big)
&\le e^S \E[e^{-\sum_{k=1}^N\tau_{k-1}}]
\\
&\le e^S(1-a\eps^{\al_*})^N
\\
&\le e^{S-a N\eps^{\al_*}},
\end{align*}
where the next to last inequality uses \eqref{eq:condit}.
This completes the proof of Lemma~\ref{lem11}.
\qed

\noi{\bf Proof of Proposition \ref{prop:zeropre}.}
The dependence of the stopping times $\{\gamma^n_j\}$ on $\eps$
will now be emphasized by writing $\gamma^{n,\eps}_j$.
	Let
	$$N_n^{\veps} = \max\{k \in \Z_+: \gamma_{2k}^{n,\veps} \le S\}.$$
	Let $\beta\in(\al_*,2)$.
	Then for all $\eps\in(0,\eps_0)$ and all large $n$,
\begin{align*}
\EE\int_0^{S} \one_{\{\|\hat X^n(t)\|\le \veps\}} dt
&\le \EE\Big[\one_{\{N_n^{\veps} \le\veps^{-\beta}\}}\sum_{k=0}^{N_n^{\veps}} (\gamma^{n,\veps}_{2k+1} - \gamma^{n,\veps}_{2k})\Big] +  S\PP(N_n^{\veps} > \veps^{-\beta})
\\
&\le \EE\Big[\sum_{k=0}^{  \veps^{-\beta}} (\gamma^{n,\veps}_{2k+1} - \gamma^{n,\veps}_{2k})\Big]
		+ S\PP\Big(\sum_{k=1}^{ \veps^{-\beta}} (\gamma^{n,\veps}_{2k} - \gamma^{n,\veps}_{2k-1}) \le S\Big).
\end{align*}
Let $\kappa$ be as in Lemma \ref{lem:momsig}.
Applying this lemma with $\eps$ replaced by $c\kappa^{-1}\eps$ gives
\[
\sup_{x\in\calS^n}\EE_x[\eta^n_{c\eps}]\le c^2\kappa^{-2}\eps^2.
\]
Hence
	\begin{align*}
		\EE\int_0^{S} \one_{\{\|\hat X^n(t)\|\le \veps\}} dt 
		\le c^2\kappa^{-2}\veps^{2-\beta} + S\PP\Big(\sum_{k=1}^{ \veps^{-\beta}} (\gamma^{n,\veps}_{2k} - \gamma^{n,\veps}_{2k-1}) \le S\Big).
	\end{align*}
Since the weakly convergent subsequence considered above \eqref{eq:indprop} was arbitrary,
we have from Lemma~\ref{lem11} that
\[
\limsup_{n\to\iy}\EE\int_0^{S} \one_{\{\|\hat X^n(t)\|\le \veps\}}dt
\le c^2\kappa^{-2}\veps^{2-\beta}+S\exp\{S-a\eps^{-(\beta-\al_*)}\}.
\]
The result follows on sending $\veps \to 0$.
\hfill \qed

\subsection{Proof of the corner property}\label{sec43}

\ 

\noi{\bf Proof of Proposition \ref{prop1}.i.}
The first step is to extend Proposition \ref{prop:zeropre} from
the special case \eqref{51} to
$\la^n, \mu^n, \nu^n$ as in the main result.
Fix $S \in (0,\infty)$.
Let $\PP^n$ denote the probability law on $D([0, S], \calS^n)$
induced by the Markov process
$\{\hat X^n(t): 0 \le t \le S\}$ starting at $\hat x^n\in\calS^n$, under $\PP$.
To distinguish the special case, let
 $\ti \PP^n$ denote the probability law on $D([0, S], \calS^n)$
 induced by $\{\hat X^n(s): 0 \le t \le S\}$ starting at $\hat x^n\in\calS^n$,
 in the special case \eqref{51}.
 Denote by $\EE^n$ and $\tilde\EE^n$ the corresponding expectations. Denote the coordinate process on $D([0,S], \calS^n)$ by $Z^n$.
 Let $L^n_{S} \doteq \frac{d\PP^n}{d\ti \PP^n}$. Then there exists
$K \in (0,\infty)$, such that for all $n$ and $x \in \calS^n$,
		\begin{equation}\label{eq:rndbd}
			\ti \EE^n[L^n_{S} ]^2 \le K.
			\end{equation} This fact is standard but for completeness we give a proof in Appendix \ref{sec:B}.
		By the Cauchy-Schwarz inequality, we have
		\begin{align*}
		\EE\int_0^S\one_{\BB_\eps}(\hat X^n(t))dt
		&=
		\E^n\int_0^{S} \one_{\BB_\eps}(Z^n(t))dt \\
		&\le(\ti \EE^n[L^n_{S} ]^2)^{1/2} \Big(\tilde\E^n\Big(\int_0^{S} \one_{\BB_\eps}(Z^n(t))dt\Big)^2\Big)^{1/2}\\
			&\le K^{1/2} \Big(\tilde\E^n\int_0^{S} \one_{\BB_\eps}(Z^n(t))dt\Big)^{1/2}.
		\end{align*}
Hence by Proposition \ref{prop:zeropre}, $\lim_{\veps \to 0}\kap(\eps)=0$,
where
\[
\kap(\eps)=\limsup_{n\to \infty} \E\int_0^{S} \one_{\BB_\eps}(\hat X^n(t))dt.
\]
Recall that $X$ denotes a weak limit point of $\hat X^n$. By appealing to the Skorohod representation theorem, we can assume that $\|\hat X^n - X\|^*_S \to 0$ a.s.\ as $n\to \infty$.
This convergence and Fatou's lemma imply
\begin{align*}
	\E\int_0^S \ONE_{\{0\}}(X(t))dt &\le 
	\E\int_0^S \liminf_{n\to \infty}\ONE_{\BB_\eps}(\hat X^n(t))dt\\
	&\le    \liminf_{n\to \infty}\E\int_0^{S}\ONE_{\BB_\eps}(\hat X^n(t)) dt\le\kap(\eps).
\end{align*}
Sending $\eps\to0$ shows that the left-hand side is $0$.
Sending  $S\to \infty$ and applying the monotone convergence theorem
proves the result. \hfill \qed

We record the following consequence of
the above proof and Lemma \ref{lem:allthree}.

\begin{corollary}\label{cor:allz}
For all $S<\infty$, $T^n_0(S) + T^n_1(S) + T^n_2(S) \to 0$ in probability as $n \to \infty$.
\end{corollary}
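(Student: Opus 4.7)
The plan is to deduce the corollary from two ingredients already in hand: the bound on the expected time spent near the origin that was extracted in the proof of Proposition \ref{prop1}.i, and the boundary inequality of Lemma \ref{lem:allthree}.

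First, I would treat $T^n_0(S)$. Since $\hat X^n = n^{-1/2}X^n$ vanishes iff $X^n$ does, one has $T^n_0(S) = \int_0^S \one_{\{0\}}(\hat X^n(t))\,dt \le \int_0^S \one_{\BB_\eps}(\hat X^n(t))\,dt$ for any $\eps>0$. The computation performed inside the proof of Proposition \ref{prop1}.i shows
\[
\EE \int_0^S \one_{\BB_\eps}(\hat X^n(t))\,dt \le K^{1/2}\Big(\tilde\EE^n \int_0^S \one_{\BB_\eps}(Z^n(t))\,dt\Big)^{1/2},
\]
and combined with Proposition \ref{prop:zeropre} this gives $\limsup_n \EE T^n_0(S) \le \kap(\eps)$ with $\kap(\eps)\to 0$ as $\eps\to 0$. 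Sending $\eps\to 0$ yields $\EE T^n_0(S)\to 0$, and hence $T^n_0(S)\to 0$ in probability.

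Next, I would apply Lemma \ref{lem:allthree} with the deterministic choices $s^n = 0$ and $t^n = S$. The lemma produces
\[
T^n_{12}(S) \le \kap_1 T^n_0(S) + \kap_1 n^{-1/2}\|\hat x^n\| + \xi^n,
\]
where $\xi^n\to 0$ in probability. Since $\hat x^n\to\hat x$, the middle term vanishes deterministically, and the first term tends to $0$ in probability by the previous step. Consequently $T^n_{12}(S)=T^n_1(S)+T^n_2(S)\to 0$ in probability, and combining this with the convergence of $T^n_0(S)$ gives the claim.

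The main point requiring attention is the first step: one must recognize that the estimate inside the proof of Proposition \ref{prop1}.i yields not merely the corner property for the weak limit $X$, but in fact a uniform (in $n$) upper bound on the expected occupation time of $\BB_\eps$ by $\hat X^n$ itself, which is stronger than what is needed for the Fatou argument there. Once this observation is isolated, the remainder of the proof is immediate from Lemma \ref{lem:allthree}.
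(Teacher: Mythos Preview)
Your proposal is correct and follows essentially the same route as the paper: first use the occupation-time bound $\kappa(\eps)$ extracted in the proof of Proposition~\ref{prop1}.i to conclude $\EE T^n_0(S)\to 0$, hence $T^n_0(S)\to 0$ in probability, and then invoke Lemma~\ref{lem:allthree} with $s^n=0$, $t^n=S$ to handle $T^n_{12}(S)$. The only cosmetic difference is that you spell out the Cauchy--Schwarz step and the individual terms in the lemma's bound, whereas the paper simply refers back to these facts.
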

\proof
	For $S<\infty$ and $\eps>0$,
	$$\limsup_{n\to \infty} \E_{\hat x^n}T^n_0(S) = \limsup_{n\to \infty} \E_{\hat x^n}\int_0^{S}
	\one_{\{0\}}(\hat X^n(t)) dt \le\limsup_{n\to \infty} \E_{\hat x^n}\int_0^{S}
	\one_{\BB_\eps}(\hat X^n(t)) dt.$$
The right-hand side was denoted in the proof of Proposition \ref{prop1}.i
by $\kappa(\eps)$, and it was shown that $\kap(0+)=0$.
It follows that $T^n_0(S)\to0$ in probability.
The result now follows upon applying Lemma~\ref{lem:allthree}
with $0=s^n<t^n=S$.
\qed

\subsection{Proof of the submartingale property}\label{sec44}

The last step of the proof is to show that
limit points satisfy the submartingale property.

\noi{\bf Proof of Proposition \ref{prop1}.ii.}
Let $f \in C_b^2(\calS)$ be constant in a neighborhood of origin and
$d^{(i)}\cdot\nabla f(x) \ge 0$ for all $x \in F_i$, $i=1,2$.
Let, for $t\ge 0$, $\calF_t \doteq \sigma\{X(s): 0\le s \le t\}$. 
We need to show that
 $\{M(t)\}_{t\ge 0}$ defined as
	$$M(t) \doteq f(X(t)) - \int_0^t \calL f(X(s)) ds, \; t \ge 0,$$
	is an $\calF_t$-submartingale.

Fix a subsequence, denoted again as $\{n\}$, along which $\hat X^n\To X$. From \eqref{eq:220},
	it follows that
	\begin{align}
	f(\hat X^n(t)) &= f(\hat x^n) + \sum_i \int_{[0,t]\times \R_+}
	\pl^{n,i}_+f(\hat X^n(s-))
	\ONE_{[0, \la^n_i]}(z) \calA_i(ds, dz)\nonumber\\
	&\quad +  \sum_i  \int_{[0,t]\times \R_+} 
	\pl^{n,i}_-f(\hat X^n(s-))\Big[\ONE_{[0, \mu^n_i]}(z) \ONE_{\calS^o}(\hat X^n(s-)) \nonumber\\
	\label{eq:220f}&\hspace{15em}+ 
	\ONE_{[0, \mu^n_i+\nu^n_i]}(z) \ONE_{F_{i^\#}}(\hat X^n(s-)) \Big] \calD_i(ds, dz).
\end{align} 
	Fix $0 \le t_1 \le t_2<\infty$ and let $\psi: C(\RR_+, \calS)\to \RR$ be a nonnegative bounded continuous function.
	To prove the submartingale property, it suffices to show that
	\begin{equation}\label{eq:submartineq}
		E\left[\psi(X(\cdot\wedge t_1)) (M(t_2)-M(t_1))\right] \ge 0.
	\end{equation}
	Letting, for $i=1,2$,
	\begin{align*}
		M^{n,A}_i(t) &\doteq \int_{[0,t]\times \R_+} 
	\pl^{n,i}_+f(\hat X^n(s-))
	\ONE_{[0, \la^n_i]}(z) \calA^c_i(ds, dz) \\
	M^{n,D}_i(t) &\doteq \int_{[0,t]\times \R_+} 
	\pl^{n,i}_-f(\hat X^n(s-))\Big[\ONE_{[0, \mu^n_i]}(z)
	\ONE_{\calS^o}(\hat X^n(s-) \\
	&\hspace{14em} + 
	\ONE_{[0, \mu^n_i+\nu^n_i]}(z) \ONE_{F_{i^\#}}(\hat X^n(s-))\Big] \calD_i^c(ds, dz),
	\end{align*}
	we can write, for $n$ large enough,
	\begin{align*}
		&f(\hat X^n(t_2)) - f(\hat X^n(t_1)) - \sum_i [(M^{n,A}_i(t_2)- M^{n,A}_i(t_1))
		+ (M^{n,D}_i(t_2)- M^{n,D}_i(t_1))]\\
		&= \sum_i\int_{t_1}^{t_2} 
		\la^n_i\pl^{n,i}_+f(\hat X^n(s)) ds
		+\sum_i\int_{t_1}^{t_2}\pl^{n,i}_-f(\hat X^n(s)) \Big[\mu^n_i \ONE_{\calS^o}(\hat X^n(s))
		+(\mu^n_i+\nu^n_i) \ONE_{F_{i^\#}}(\hat X^n(s))\Big] ds\\ 
	&= \sum_i\int_{t_1}^{t_2} \Big[\la^n_i\pl^{n,i}_+f(\hat X^n(s))
	+\mu^n_i\pl^{n,i}_-f(\hat X^n(s))\Big] ds
	- \sum_i\int_{t_1}^{t_2}\pl^{n,i}_-f(\hat X^n(s))\mu^n_i \ONE_{F_i}(\hat X^n(s)) ds\\
	&\quad  +\sum_i\int_{t_1}^{t_2}\pl^{n,i}_-f(\hat X^n(s))\nu^n_i\ONE_{F_{i^\#}}(\hat X^n(s)) ds,
	\end{align*}
	where in proving the second equality, we have used the fact that $f$ is constant in a neighborhood of the origin and so 
	$\pl^{n,i}_-f(\hat X^n(s)) \ONE_{\{0\}}(\hat X^n(s)) =0$ for $n$ large enough.

	Next, using Taylor's approximation, write for $i=1,2$,
\begin{align*}
	\pl^{n,i}_+f(\hat X^n(s)) 
	&= n^{-1/2}\nabla_i f(\hat X^n(s)) + n^{-1}\frac{1}{2} \nabla_{ii}f(\hat X^n(s)) + G^{n,A}_i(s),\\ 
\pl^{n,i}_-f(\hat X^n(s)) 
	&= -n^{-1/2}\nabla_i f(\hat X^n(s)) + n^{-1}\frac{1}{2} \nabla_{ii}f(\hat X^n(s)) + G^{n,D}_i(s).
\end{align*}
Here $G^{n,A}_i(s)=(2n)^{-1}(\nabla_{ii}f(\hat Y^{(i),n}(s))-\nabla_{ii}f(\hat X^n(s))$
where $\hat Y^{(i),n}(s)$ is a point on the line segment joining
$\hat X^n(s)$ with $\hat X^n(s)+n^{-1/2}e^{(i)}$. In particular,
$\|\hat Y^{(i),n}(s)-\hat X^n(s)\|\le n^{-1/2}$. Similar relations holds for $G^{n,D}_i$.
By the tightness of $\hat X^n$ and the boundedness and continuity of $\nabla_{ii}f$,
\begin{equation}\label{eq:148}\sup_{t_1\le s \le t_2}\sum_i n[|G^{n,A}_i(s)|+ |G^{n,D}_i(s)|] \to 0\end{equation}
in $L^1$ as $n\to \infty$.
Now note that, for $i=1,2$, and $s \in [t_1, t_2]$,
\begin{align}
 &[-\pl^{n,i}_-f(\hat X^n(s))
	\mu^n_i + \pl^{n,i^\#}_-f(\hat X^n(s))\nu^n_{i^\#}]\nonumber\\
	&= 	[\hat \mu^n_i \nabla_if(\hat X^n(s)) - 
	\hat \nu^n_{i^\#} \nabla_{i^\#}f(\hat X^n(s))] + 
	n^{1/2} [ \mu_i \nabla_if(\hat X^n(s)) - 
	\nu_{i^\#} \nabla_{i^\#}f(\hat X^n(s))] +\tilde G^n_i(s),\label{eq:214}
\end{align}
where
$$\tilde G^n_i(s) = \bar\nu^n_{i^\#}\Big(\frac{1}{2} \nabla_{i^\#i^\#}f(\hat X^n(s)) +  nG^{n,D}_{i^\#}(s)\Big) -\bar\mu^n_i\Big(\frac{1}{2}  \nabla_{ii}f(\hat X^n(s)) + nG^{n,D}_i(s)\Big).$$
Since, $\bar \mu^n_i$, $\bar \nu^n_{i^\#}$ are bounded, $f \in C_b^2$, and \eqref{eq:148} holds, we have from Corollary \ref{cor:allz}
that
$$\int_{t_1}^{t_2} |\tilde G^n_i(s)| \ONE_{F_i}(\hat X^n(s)) ds \to 0 \mbox{ in $L^1$, as } n \to \infty.$$
Similarly, since $\hat \mu^n_i$ and $\hat \nu^n_{i^\#}$ are bounded
$$\int_{t_1}^{t_2} |\hat \mu^n_i \nabla_if(\hat X^n(s)) - 
	\hat \nu^n_{i^\#} \nabla_{i^\#}f(\hat X^n(s))|\ONE_{F_i}(\hat X^n(s)) ds \to 0 \mbox{ in $L^1$, as } n \to \infty.$$
Also, by the property $d^{(i)}\cdot\nabla f(x) \ge 0$ for all $x \in F_i$, we have that
$$[ \mu_i \nabla_if(\hat X^n(s)) - 
	\nu_{i^\#} \nabla_{i^\#}f(\hat X^n(s))]\ONE_{F_i}(\hat X^n(s)) \ge 0.$$
Using the above observations in \eqref{eq:214}, we see that	
\begin{align*}
	\sum_i\int_{t_1}^{t_2} \Big[-\pl^{n,i}_-f(\hat X^n(s))
	\mu^n_i + \pl^{n,i^\#}_-f(\hat X^n(s))\nu^n_{i^\#}\Big]
	\ONE_{F_i}(\hat X^n(s))ds 
	\ge   H^n_1,
\end{align*}
where $H^n_1\to 0$ in $L^1$.
	Next, using \eqref{eq:148},
	\begin{align*}
		&\sum_i [
		\la^n_i\pl^{n,i}_+f(\hat X^n(s))
	+ \mu^n_i\pl^{n,i}_-f(\hat X^n(s))]\\ 
	&= \sum_i [\hat \la^n_i - \hat \mu^n_i]\nabla_if(\hat X^n(s))
	+ \frac{1}{2}\sum_i n^{-1}[\la^n_i+\mu^n_i]\nabla_{ii}f(\hat X^n(s)) +\check G^n(s),
	\end{align*}
	where $\sup_{t_1\le s \le t_2} |\check G^n(s)|\to 0$ in $L^1$.
	Since $\hat \la^n_i - \hat \mu^n_i \to b_i$ and  $n^{-1}[\la^n_i+\mu^n_i] \to \la_i+\mu_i=2\la_i$, it then follows that
	\begin{align*}
		&\sum_i \int_{t_1}^{t_2}[
		\la^n_i\pl^{n,i}_+f(\hat X^n(s))
	+ \mu^n_i\pl^{n,i}_-f(\hat X^n(s))]ds 
	= \int_{t_1}^{t_2} \calL f(\hat X^n(s)) ds + H^n_2,
	\end{align*}
	where $H^n_2\to 0$ in $L^1$.
	Combining the above, and letting $\bar M^n(t) \doteq \sum_i (M^{n,A}_i(t)+
		M^{n,D}_i(t))$, we now see that
	\begin{align*}
		f(\hat X^n(t_2)) - f(\hat X^n(t_1)) - \int_{t_1}^{t_2} \calL f(\hat X^n(s)) ds \ge \bar M^n(t_2) - \bar M^n(t_1) + H^n,
	\end{align*}
	where $H^n \to 0$ in $L^1$.
	Thus, since $\psi$ is nonnegative,
	\begin{align*}
		&\EE\left[\psi(\hat X^n(\cdot\wedge t_1)) \left(f(\hat X^n(t_2)) - f(\hat X^n(t_1)) - \int_{t_1}^{t_2} \calL f(\hat X^n(s)) ds\right)\right]\\
		&\ge \EE\left[\psi(\hat X^n(\cdot\wedge t_1)) \left(\bar M^n(t_2) - \bar M^n(t_1) + H^n\right)\right]
		=\EE\left[\psi(\hat X^n(\cdot\wedge t_1))H^n\right],
	\end{align*}
	where in the last equality we have used the martingale property of $\bar M^n$.
	Now sending $n \to \infty$ and recalling the weak convergence of $\hat X^n$ to $X$ we have
	\begin{align*}
		&\EE\left[\psi(X(\cdot\wedge t_1)) \left(f(X(t_2)) - f(X(t_1)) - \int_{t_1}^{t_2} \calL f(X(s)) ds\right)\right]\\
		&= \lim_{n\to \infty} \EE\left[\psi(\hat X^n(\cdot\wedge t_1)) \left(f(\hat X^n(t_2)) - f(\hat X^n(t_1)) - \int_{t_1}^{t_2} \calL f(\hat X^n(s)) ds\right)\right] \ge 0.
	\end{align*}
	This proves \eqref{eq:submartineq} and completes the proof of the proposition. 
\hfill \qed

\appendix
\section{Construction of $\La^{\veps}$}
\label{sec:A}

Fix $f \in D(\R_+, \R^2)$ with $f(0) \in \calS$ and $|f(0)| > c_0\veps$,
satisfying, for all $t>0$,
$\|f(t)-f(t-)\| \le \tilde \veps \doteq c_0\veps/4$ and having locally finitely many jumps.
For notational simplicity, denote the map $\Gam^{d^{(1)}}_{\R_+\times \R}$ 
(resp.\ $\Gam^{d^{(2)}}_{\R\times \R_+}$) as $\Gam_1$ (resp.\ $\Gam_2$).
Also let $\tilde F_i \doteq \{x \in \calS^c: \inf_{y \in F_i}\|x-y\| \le \tilde \veps\}$.
Define a sequence $g_N\in D(\R_+,\R^2)$, $N\in\Z_+$ recursively as follows.
Let $\sig_0=0$ and
$$g_0(t) \doteq f(t), \; t \ge 0.$$
For $N\in\N$, if $\sig_{N-1}<\iy$, let
$$
\gamma_N \doteq \inf\{t\ge \sig_{N-1}: g_{N-1}(t) \in \calS^c\}, \;
\eta_N \doteq \inf\{t\ge \sig_{N-1}: |g_{N-1}(t)| \le c_0\veps\}, \; \sigma_N \doteq \gamma_N \wedge 
\eta_N.
$$
If $\gam_N<\iy$ and $\gam_N<\eta_N$, let $i_N$ be the index $i\in\{1,2\}$
for which $g_{N-1}(\gam_N)=g_{N-1}(\sig_N)\in\tilde F_i$.
If $\sig_{N-1}=\iy$ let $\sig_N=\iy$.
Let $g_N(t)=g_{N-1}(t)$ for $t \in [0, \sigma_N)$. If $\sig_N<\iy$,
for $t\in[0,\iy)$, let
\begin{align*}
	g_N(t) =
	\begin{cases}
		g_{N-1}(\sigma_N) & \mbox{ if } \eta_N \le \gamma_N\\ 
		\Gam_{i_N}(g_{N-1})(t) & \mbox{ if } \gamma_N < \eta_N.
	\end{cases}
\end{align*}
Note that one always has $i_{N}\ne i_{N-1}$,
and so the construction alternates between $\Gam_1$ 
(reflection at $F_1$) and $\Gam_2$ (reflection at $F_2$).

This construction produces a sequence $g_N$, $N\in\Z_+$ satisfying
$g_N(t)=g_{N-1}(t)$ for $t\in[0,\sig_N)$. If $\sig_N\to\iy$ then the pointwise
limit $g(t)\doteq\lim_Ng_N(t)$ exists for every $t$, and we set
$\La^\eps(f)=g$. In this case the trajectory never gets absorbed in
$\BB_{c_0\eps}$. If, on the other hand, $\sig_N$ remain bounded
then absorption must occur, namely
there must exist $N$ for which $\eta_N<\gam_N$. Let $K$ be the first
such $N$ and set $\La^\eps(f)=g_K$. This completes the construction.

\section{Proof of estimate \eqref{eq:rndbd}.}\label{sec:B}

We begin by observing that one can give the following distributionally equivalent construction of $X^n$.
Let $\{\calA^n_i, \calD^n_i, \calB^n_i, i=1,2\}$ be mutually independent Poisson processes with intensities $\la^n_i$, $\mu^n_i$ and $\mu^n_i + \nu^n_i$, $i=1,2$, respectively.
Define
\begin{equation}
\label{eq:220n}
\begin{split}
	 X^n(t) &= x^n+A^n(t)-D^n(t),
	 \\
	A^n_i(t) &= \calA^n_i(t),
	\\
	D^n_i(t) &= \int_{[0,t]}\ONE_{\calS^o}(X^n(s-))d\calD^n_i(s)
	 + 
	\int_{[0,t]}\ONE_{F_{i^\#}}(X^n(s-)) \calB^n_i(s).
\end{split}
\end{equation}
Then the process $X^n$ has the same law as the process $X^n$ introduced in \eqref{eq:220}.
When $\la^n_i, \mu^n_i, \nu^n_i$ are replaced by $n\la_i, n\mu_i, n\nu_i$, the corresponding processes above are denoted as $\ti\calA^n_i, \ti\calD^n_i, \ti\calB^n_i, \ti X^n$.
We can find $C \in (0, \infty)$ such that, for all $n \in \N$,
\begin{equation}\label{eq:553}\sum_i \left(\left|\frac{\la^n_i}{n\la_i}- 1\right| + \left|\frac{\mu^n_i}{n\mu_i}- 1\right| + \left|\frac{\nu^n_i+ \mu^n_i}{n\nu_i + n\mu_i}- 1\right|\right) \le \frac{C}{n^{1/2}}.\end{equation}
Fix $S <\infty$ and let $\Sig=D([0,S],\R)$.
Let $P_a^{n,i}$ denote the law of $\calA^n_i|_{[0,S]}$,
regarded as a probability measure on $(\Sig, \calB(\Sig))$. Similarly, denote the law of $\ti \calA^n_i|_{[0,S]}$ as $\ti P_a^{n,i}$.
Then by Girsanov's theorem for point processes,
$$\frac{dP_a^{n,i}}{d\ti P_a^{n,i}} = \exp \left\{\log\left(\frac{\la^n_i}{n\la_i}\right) \calA^n_i(S) + n\la_i S\left(1- \frac{\la^n_i}{n\la_i}\right)   \right\}.$$
Denoting the expectation under $\ti P_a^{n,i}$ as $\ti E_a^{n,i}$, it then follows from \eqref{eq:553} that
$$\ti E_a^{n,i} \left(\frac{dP_a^{n,i}}{d\ti P_a^{n,i}} \right)^2 \le  \exp\left\{ n\la_iS \left(1 - \frac{\la^n_i}{n\la_i}\right)^2    \right\}
\le e^{C^2\la_iS}.$$
Similar estimates hold when $(\calA^n_i, \ti \calA^n_i)$ is replaced by $(\calD^n_i, \ti \calD^n_i)$ and $(\calB^n_i, \ti \calB^n_i)$.
From the mutual independence of the Poisson processes it now follows that $L^n_S$
satisfies
$$\tilde \EE^n(L^n_S)^2 \le e^{C^2\sum_i (\la_i + 2\mu_i + \nu_i) S}.$$
This proves \eqref{eq:rndbd}. \qed

\noi{\bf Acknowledgment.}
Research of RA supported by ISF (grant 1035/20). Research of AB supported by the NSF (DMS-2134107, DMS-2152577).


\bibliographystyle{is-abbrv}

\bibliography{version-subm}

%
%

\end{document}